\renewcommand*{\eqref}[1]{%
  \hyperref[{#1}]{\textup{\tagform@{\ref*{#1}}}}%
}
\newtheorem{theorem}{Theorem}
\newtheorem{proposition}[theorem]{Proposition}
\newtheorem{lemma}[theorem]{Lemma}
\theoremstyle{definition}
\newtheorem{example}{Example}
\newtheorem{remark}{Remark}
\newcommand{\R}{\mathbb{R}}
\newcommand{\g}{\mathfrak{g}}
\newcommand{\e}{\mathbf{e}}
\newcommand{\uu}{\mathfrak{u}}
\newcommand{\so}{\mathfrak{so}}
\newcommand{\ad}{\mathrm{ad}}
\DeclareMathOperator{\Ad}{\mathrm{Ad}}
\DeclareMathOperator{\Span}{\mathrm{span\,}}
\DeclareMathOperator{\diag}{\mathrm{diag}}
\DeclareMathOperator{\rank}{\mathrm{rank}}
 \DeclareMathOperator{\pr}{\mathrm{pr}}
\DeclareMathOperator{\tr}{\mathrm{tr}}
\begin{document}

\title[Sub-Riemannian geodesics related to filtrations of Lie algebras]{Normal sub-Riemannian geodesics related to filtrations of Lie algebras}

\author{Bo\v zidar Jovanovi\'c, Tijana \v Sukilovi\' c, Srdjan Vukmirovi\'c}

\address{B.J.: Mathematical Institute, Serbian Academy of Sciences and
Arts, Kneza Mihaila 36, 11000 Belgrade, Serbia}
\email{bozaj@mi.sanu.ac.rs}

\address{ T.\v{S}, S.V:  Faculty of Mathematics, University of Belgrade, Studentski trg 16, 11000 Belgrade, Serbia}
\email{tijana.sukilovic@matf.bg.ac.rs, srdjan.vukmirovic@matf.bg.ac.rs}

\keywords{integrability; normal sub-Riemannian geodesics; explicit solutions; Gel'fand-Cetlin systems; Manakov metrics on SO(n); symplectic reduction.}
\subjclass[2020]{37J35,  53C17, 70H06, 70G65}

\begin{abstract}
There is a natural way to construct sub-Riemannian structures that depend on $n$ parameters on compact Lie groups. These structures are related to the filtrations of Lie subalgebras $\g_0 < \g_1 < \g_2 < \dots < \g_{n-1}<\g_n=\g=Lie(G)$. In the case where $n=1$, the explicit solution for normal sub-Riemannian geodesics was provided by Agrachev,  Brockett, and Jurjdevic. We extend their solution to apply to general chains of Lie subgroups. Additionally, we describe normal geodesic lines of the induced sub-Riemannian structures on homogeneous spaces $G/K$, where $\mathfrak g_0=Lie(K)$.
\end{abstract}

\maketitle

\section{Introduction}

Consider the chain of compact Lie subgroups
\begin{align*}
G_0 < G_1 < G_2 < \dots < G_{n-1}\subset
G_n=G
\end{align*}
and the corresponding filtration of the Lie algebra $\g=Lie(G)$
\begin{align}\label{filtration}
\g_0 <\g_1<\g_2< \dots<\g_{n-1}< \g_n=\g.
\end{align}

We note that $"<"$ denotes the strict inclusion.
Fix an invariant scalar product
$\langle\,\cdot\,,\,\cdot\,\rangle$ on $\g$ and denote the
restrictions of $\langle\,\cdot\,,\,\cdot\,\rangle$ to $\g_i$
also by $\langle\,\cdot\,,\,\cdot\,\rangle$.
Let $\mathfrak p_i$ be the orthogonal complement of $\g_{i-1}$ in $\g_i$, and set $\g_0=\mathfrak p_0$.
Then $\g_i=\mathfrak p_0\oplus \mathfrak p_1 \oplus \dots\oplus \mathfrak p_i$.
For $\xi\in\g$, we set
\begin{align*}
\xi=\xi_0+\dots+\xi_n, \qquad \xi_{\mathfrak g_i}=\xi_0+\dots+\xi_i\in \g_i,  \qquad \xi_i=\pr_{\mathfrak p_i}\xi, \qquad i=0,\dots,n.
\end{align*}

Let us assume that for some set of indices
$\mathcal I \subsetneqq \{0,1,\dots,n\}$, the linear subspace
\begin{align}\label{defd}
\mathfrak d=\bigoplus_{i\in\mathcal I} \mathfrak p_i<\g
\end{align}
generate $\g$ by commutation. Then the corresponding left-invariant distribution $\mathcal D\subset TG$ is given by
\begin{align}\label{mathcal D}
\mathcal D\vert_g=dL_g(\mathfrak d), \qquad g\in G,
\end{align}
and it is completely nonholonomic. We consider normal geodesic lines
(see~\cite{ABB, Mo}) of the
left-invariant metrics $ds^2_{\mathcal D,s}$ defined by
the scalar product:
\begin{align}\label{sub-Rimanov proizvod}
(\xi,\eta)_\mathfrak d=\sum_{i\in\mathcal I} \frac{1}{s_i} \langle \xi_i,\eta_i\rangle, \quad \xi,\eta\in\mathfrak d,  \quad s_i>0, \quad i\in\mathcal I,
\end{align}

In the case where $\mathfrak d=\mathfrak p_1\oplus\dots\oplus \mathfrak p_n$, we recover the setting studied in~\cite{Jo, JSV2024}.
The corresponding normal sub-Riemannian geodesic flow on $T^*G$ is completely integrable in the non-commutative sense by means of integrals that are polynomial in momenta.
A generic motion is a quasi-periodic winding over $\Delta$--dimensional invariant isotropic tori~\cite{JSV2024}, where\footnote{Here we take
$x_i\in\mathfrak p_i$, $i=1,\dots,n$, such that the dimensions of
the isotropy algebras $\g_{i}(x_{\g_i})$ and $\g_{i-1}(x_{\g_i})$ are minimal.}
\begin{align}\label{Delta}
  \Delta=\rank\g_0+\sum_{i=1}^n
\dim\pr_{\mathfrak p_i}(\g_i(x_{\g_i})).
\end{align}

In particular, the simplest case arises when $\mathfrak d=\mathfrak p_1$ ($n=1$). In this setting, the sub-Riemannian metric $ds^2_{\mathcal D,s}$ is simply the restriction of the bi-invariant metric associated with $\langle\cdot,\cdot\rangle$, scaled by ${1}/{s_1}$, to the distribution $\mathcal D$.
This sub-Riemannian problem is commonly referred to as a $\mathfrak p_1\oplus\g_0$--problem on a Lie group $G$, and the
corresponding normal sub-Riemannian geodesics are given by:
\begin{align}\label{rekonstrukcija}
g(t)=\bar g\exp(ts_1 \bar x)\exp(-ts_1\bar x_0),
\end{align}
where $g(0)=\bar g$, and $\bar x=\bar x_0+\bar x_1\in\g$ is arbitrary
(see~\cite{Sac}). The expression~\eqref{rekonstrukcija} was independently derived by Agrachev~\cite{Ag} (for $\dim\g_0=1$) and by Brockett~\cite{Br} and Jurdjevic~\cite{Ju1999} (for a symmetric pair $(\mathfrak g,\mathfrak g_0)$).

By applying Souris' result~\cite{Su}, we obtain an explicit solution for normal sub-Riemannian geodesics on the space $(G,ds^2_{\mathcal D_s})$ (see Theorems~\ref{prva},~\ref{pomocna}). We also extend this solution to the corresponding sub-Riemannian structures on the homogeneous spaces $G/K$, $K=G_0$ (see Theorem~\ref{druga}). In particular, item~\ref{it:pomocna1} of Theorem~\ref{pomocna}
provides all solutions of
the Gel'fand--Cetlin systems on $\so(n)$ and $\uu(n)$, covering both regular and singular adjoint orbits~\cite{GS1, BMZ}.

The solutions stated in
Theorems~\ref{prva},~\ref{pomocna},  and~\ref{druga} are valid for all filtrations~\eqref{filtration}  without requiring the corresponding Lie subgroups $G_i$, with $\g_i=Lie(G_i)$, to be closed.
We only need that $K=G_0$ is a closed Lie subgroup of $G$ for the smoothness of the homogeneous space $G/K$. Also, we can consider an arbitrary semi-simple Lie group $G$ endowed with the Killing form $\langle\cdot,\cdot\rangle$. Along with the corresponding chains of Lie algebras~\eqref{filtration}, we impose the additional requirement that the restriction of the Killing form $\langle\cdot,\cdot\rangle$ to each subalgebra  $\g_i$ is non-degenerate.

Examples illustrating the construction and connection to sub-Riemannian structures with integrable normal geodesic flows—derived from the Manakov metrics on the orthogonal group  $SO(n)$ (Theorems~\ref{man1},~\ref{man2}) and a class of homogeneous spaces of $SO(n)$ (Theorem~\ref{man3})—are presented in Sections~\ref{sec4} and~\ref{sec5}. As a consequence, in Section~\ref{sec4}, we provide a positive answer for $G=SO(n)$ and $n-1\leq k <\frac{n(n-1)}{2}$ to the following natural question (see~\cite{BAB}): Is there a left-invariant sub-Riemannian structure of $\rank k$ with an integrable normal geodesic flow on any (semi-simple) Lie group $G$ and any admissible $k$?  Additionally, we present an example that demonstrates a negative answer for $k=2$ and $n>3$.

In~\cite{Jo}, we also investigated similar nonholonomic problems on Lie groups. Further examples of integrable sub-Riemannian geodesic flows on Lie groups and homogeneous spaces can be found, for instance, in~\cite{Sac, Ju, Ju2, BJ3, Po, PS}.

\section{The Agrachev--Brockett--Jurdjevic formula for chains of subalgebras}

\subsection{Bogoyavlensky's conjecture}

Using $\langle\,\cdot\,,\,\cdot\,\rangle$, we identify $\g\cong\g^*$ and
$\g_i\cong\g^*_i$, $i=0,\dots,n$.
In this subsection, we do not assume that $\mathfrak d$ is bracket generating subspace of~$\g$.
Recall that we use the
following notation
\begin{align}\label{oznake}
x=x_0+\dots+x_n, \quad x_{\mathfrak g_i}=x_0+\dots+x_i\in \g_i,  \quad x_i=\pr_{\mathfrak p_i}x, \quad i=0,\dots,n.
\end{align}

Consider the operator $A: \g^*\to \g$ and the quadratic left-invariant Hamiltonian function defined by
\begin{align*}
A(x)=A_0(x_0)+s_1x_1+\dots+s_nx_n, \qquad H(x)=\frac12\langle A(x),x\rangle,
\end{align*}
where $A_0:\g_0^*\to\g_0$ and $s_1,\dots,s_n$ are real parameters.

In left trivialization $T^*G\cong G\times \g^*(g,x)$, the corresponding Hamiltonian system is given~by
\begin{align}
\label{Euler}&\dot x=[x,\omega], \qquad \omega:=\nabla H\vert_x=A(x),\\
\label{KIN} &\dot g=d(L_g)(\omega)=d(L_g)(A_0(x_0)+s_1 x_1+\dots +s_n x_n).
\end{align}

The Euler equation~\eqref{Euler} forms a closed Hamiltonian system on $\g^*\cong\g$ equipped with the standard Lie–Poisson brackets.
The variable $\omega\in\g$ is commonly referred to as the angular velocity, while the variable $x\in\g^*\cong \g$ is known as the angular momentum (see~\cite{Ar}).

Due to the relations
\begin{align}\label{uslovi}
[\mathfrak p_i,\mathfrak p_j]\subset \mathfrak p_j,\qquad j=1,\dots,n, \qquad i=0,\dots,j-1,
\end{align}
Euler equation~\eqref{Euler} can be decomposed into the following form (see~\cite{B1, JSV}):
\begin{align}
&\dot x_0 = [x_0,A_0(x_0)], \label{Euler-0}\\
&\dot x_i = [s_i x_0-A_0(x_0)+(s_i-s_1) x_1+\dots+(s_i-s_{i-1})x_{i-1},x_i], \quad i=1,\dots,n. \label{Euler-i}
\end{align}

In~\cite{B1} Bogoyavlensky conjectured that if the Euler equation~\eqref{Euler-0} is completely integrable, then the extension of the system~\eqref{Euler-0},~\eqref{Euler-i} is also completely integrable.
The simplest examples are the Gel'fand--Cetlin integrable systems related to natural filtrations of Lie algebras $\so(n)$ and $\uu(n)$ and $\g_{0}=\{0\}$~\cite{GS1, BMZ}.
In~\cite{JSV} we proved the conjecture: if the Euler equation~\eqref{Euler-0} is completely integrable, then the system~\eqref{Euler-0},~\eqref{Euler-i} is completely integrable in the non-commutative sense~\cite{MF, N}. We also
provide examples where there exist complete sets of additional Lie-Poisson commutative polynomial integrals~\cite{JSV, JSV2024}.
In the important case, when $(\g_i,\g_{i-1})$ are symmetric pairs, the commuting polynomial integrals are obtained by Mikityuk~\cite{Mik}.

Note that by the Noether theorem, the components of the momentum mapping
$\Phi(g,x)=\Ad_g(x)$
of the left $G$--action are always integrals of the system~\eqref{Euler-0},~\eqref{Euler-i},~\eqref{KIN},
implying non-commutative integrability
on the total phase space $T^*G$ (see~\cite{MF}).

\subsection{Sub-Riemannian geodesic flows related to filtration of Lie algebras}\label{sec4}

Let us return to the sub-Riemannian problem $(G,ds^2_{\mathcal D_s})$.
The Hamiltonian function of the normal sub-Riemannian geodesic flow,
\begin{align*}
H_{sR}(g,x)=H_{sR}(x)=\frac{s_0}2 \langle x_0,x_0\rangle+\dots+\frac{s_n}2\langle x_n,x_n\rangle, \quad s_i=0, \quad i\notin \mathcal I,
\end{align*}
belong to the class of problems studied by Bogoyavlensky  with $A_0(x_0)\equiv s_0 x_0$.
The corresponding normal sub-Riemannian geodesic flow is given by
\begin{align}
&\dot x_0 = 0, \label{Euler-0*}\\
&\dot x_i = [s_i x_0+(s_i-s_1) x_1+\dots+(s_i-s_{i-1})x_{i-1},x_i], \quad i=1,\dots,n, \label{Euler-i*}\\
&\dot g= d(L_g)(s_1 x_1+\dots +s_n x_n). \label{KIN*}
\end{align}

We assume that
\begin{align*}
s_i\ne s_{i-1},\qquad i=1,2,\dots,n.
\end{align*}
Otherwise, the problem can be reduced to a chain with fewer Lie subalgebras.

We have the following statement describing normal sub-Riemannian geodesics on $(G,ds^2_{\mathcal D,s})$.

\begin{theorem}\label{prva}
\begin{enumerate}[wide,label=(\roman*)]
    \item\label{it:prva1} The normal sub-Riemannian geodesic flow~\eqref{Euler-0*},~\eqref{Euler-i*},~\eqref{KIN*}  is completely integrable in a non-commutative sense by means of integrals polynomial in momenta. A generic motion is a quasi-periodic winding over $\Delta$--dimensional invariant isotropic tori within $T^*G$, where $\Delta$ is given by~\eqref{Delta}.

     \item\label{it:prva2} The normal sub-Riemannian geodesics of the metric $ds^2_{\mathcal D,s}$ are given by
\begin{align*}
g(t)=\bar g\exp(ts_n\bar x)\exp(t(s_{n-1}-s_n)\bar x_{\g_{n-1}})\cdots \exp(t(s_1-s_2)\bar x_{\g_1})\exp(t(s_0-s_1)\bar x_0),
\end{align*}
where $g(0)=\bar g$, $\bar x\in\g^*\cong\g$ is arbitrary, and $\bar x_{\g_i}=\bar x_0+\dots+\bar x_i$.
\end{enumerate}
\end{theorem}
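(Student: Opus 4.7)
Part (i) will follow from the existing integrability theory for the Bogoyavlensky chain. Since $H_{sR}$ is of that form with $A_0(x_0) = s_0 x_0$, the reduced equation \eqref{Euler-0*} collapses to $\dot x_0 = 0$ and is trivially completely integrable. The proof of Bogoyavlensky's conjecture in \cite{JSV} then delivers non-commutative integrability of the full system \eqref{Euler-0*}--\eqref{Euler-i*} on $\g^*$ by polynomial integrals, and adjoining the components of the momentum map $\Phi(g,x) = \Ad_g(x)$ of the left $G$-action extends this to $T^*G$. The dimension of a generic invariant isotropic torus comes out as $\Delta$ in \eqref{Delta}, by the same argument as in \cite{JSV2024}, since allowing $s_i = 0$ for $i \notin \mathcal I$ only rescales coefficients and does not affect the generic rank count.

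For part (ii), set $F_i(t) := \exp(t(s_i - s_{i+1})\bar x_{\g_i})$ with the convention $s_{n+1} := 0$, so the candidate reads $g(t) = \bar g\,F_n(t) F_{n-1}(t) \cdots F_0(t)$. The product rule, together with the fact that $\Ad_{F_i^{\pm 1}}$ fixes $\bar x_{\g_i}$, yields
\[
g(t)^{-1}\dot g(t) = \sum_{i=0}^n (s_i-s_{i+1})\,\Ad_{U_i(t)}(\bar x_{\g_i}),\qquad U_i(t) := F_0(t)^{-1}F_1(t)^{-1}\cdots F_{i-1}(t)^{-1}.
\]
The first key step is an easy induction on $i$ exploiting $\Ad_{F_{i-1}^{-1}}(\bar x_{\g_{i-1}}) = \bar x_{\g_{i-1}}$: one obtains $\Ad_{U_i}(\bar x_{\g_i}) = \sum_{j=0}^{i} y_j(t)$, where $y_j(t) := \Ad_{U_j(t)}(\bar x_j)$. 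Swapping the order of summation and telescoping (using $s_{n+1}=0$) collapses the expression to $g^{-1}\dot g = \sum_{j=0}^n s_j y_j$. Since $U_j(t) \in G_{j-1}$ as a product of elements of $G_0, G_1,\dots, G_{j-1}$, and $[\g_{j-1},\mathfrak p_j] \subset \mathfrak p_j$ by \eqref{uslovi}, each $y_j(t)\in\mathfrak p_j$; together with $s_j = 0$ for $j\notin\mathcal I$ this yields horizontality $g^{-1}\dot g \in \mathfrak d$, and setting $x_j(t) := y_j(t)$ makes the kinematic equation \eqref{KIN*} tautological.

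A second telescoping handles the Euler equations. The recursion $U_{j+1} = U_j F_j^{-1}$ produces
\[
\dot U_{j+1} U_{j+1}^{-1} - \dot U_j U_j^{-1} = -(s_j-s_{j+1})\Ad_{U_j}(\bar x_{\g_j}),
\]
and substituting the identity $\Ad_{U_j}(\bar x_{\g_j}) = \sum_{i\le j} y_i$ and summing gives $\dot U_j U_j^{-1} = \sum_{i<j}(s_j - s_i)y_i$. Differentiating $y_j = \Ad_{U_j}(\bar x_j)$ then produces $\dot x_j = \bigl[\sum_{i<j}(s_j - s_i)x_i, x_j\bigr]$, which is \eqref{Euler-i*}, while $x_0 \equiv \bar x_0$ gives \eqref{Euler-0*}. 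The main obstacle is organizing the two iterated adjoint sums so that at each step one can invoke the filtration compatibility \eqref{uslovi}; Souris' formula \cite{Su} packages precisely this kind of telescoping for products of exponentials indexed by a chain of subgroups, so the cleanest proof will invoke it directly.
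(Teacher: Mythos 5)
Your proposal is correct, and part \ref{it:prva1} is handled exactly as in the paper (by reduction to the case $\mathcal I=\{1,\dots,n\}$ of~\cite{JSV2024} together with the momentum map of the left $G$--action). For part \ref{it:prva2} your route differs in organization from the paper's, though the computational core coincides. The paper derives the formula by taming $ds^2_{\mathcal D,s}$ with the Riemannian metrics $ds^2_s$, quoting Souris' explicit geodesics~\eqref{resenje}, and passing to the limit $\hat s_i\to 0$ for $i\notin\mathcal I$; the rigorous justification is then delegated to Theorem~\ref{pomocna}, whose proof verifies the Euler equations directly via the operators $A^t_{[0,j)}=\Ad_{\exp(tv_0)}\circ\cdots\circ\Ad_{\exp(tv_{j-1})}$ and whose reconstruction part is omitted as ``dual to'' Souris. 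Your argument is the self-contained version of exactly that verification: your $U_j=F_0^{-1}\cdots F_{j-1}^{-1}$ satisfies $\Ad_{U_j}=A^t_{[0,j)}$, your $y_j$ are the $x_j(t)$ of~\eqref{res}, and your second telescoping ($\dot U_jU_j^{-1}=\sum_{i<j}(s_j-s_i)y_i$) reproduces the paper's chain of identities for $\frac{d}{dt}A^t_{[0,j)}(\bar x_j)$. What your write-up adds is the first telescoping, which proves the kinematic identity $g^{-1}\dot g=\sum_j s_j y_j$ (hence~\eqref{KIN*} and horizontality) directly instead of by appeal to duality with~\cite{Su}; this fills in the one step the paper leaves implicit, and it handles arbitrary $s_0,\dots,s_n$ without needing the nondegenerate inertia operator $I$ at any stage. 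Two cosmetic points: your general Euler equation $\dot x_j=[\sum_{i<j}(s_j-s_i)x_i,x_j]$ is~\eqref{Euler-i**}, which agrees with~\eqref{Euler-i*} under the normalization $s_0=0$ used there; and in the claim $U_j(t)\in G_{j-1}$ all that is actually needed is that $\Ad_{\exp(v)}=e^{\ad_v}$ preserves $\mathfrak p_j$ for $v\in\g_{j-1}$, which avoids any issue with the subgroups $G_i$ not being closed.
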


\begin{example}
In the basic case where $\mathfrak d=\mathfrak p_1\oplus\dots\oplus \mathfrak p_n$ ($s_0=0, s_i>0, i\ge 1$), as a direct generalization of the Agrachev--Brockett--Jurjdevic solution~\eqref{rekonstrukcija},
the normal geodesic lines are of the form
\begin{align*}
g(t)=\bar g\exp(ts_n\bar x)\exp(t(s_{n-1}-s_n)\bar x_{\g_{n-1}})\cdots \exp(t(s_1-s_2)\bar x_{\g_1})\exp(-ts_1)\bar x_0).
\end{align*}
\end{example}

\begin{proof}
\ref{it:prva1} The proof of the first statement is the same as the proof for the case $\mathcal I=\{1,\dots,n\}$ given in~\cite{JSV2024}.

\ref{it:prva2} Consider the left-invariant Riemannian metrics $ds^2_s$ defined by the scalar product
\begin{align}\label{rimanova}
(\xi,\eta)_I=\langle I(\xi), \eta\rangle  \qquad \xi,\eta\in\mathfrak g,
\end{align}
where the operator $I$ (the inertia tensor in the case of rigid bodies~\cite{Ar}) is given by
\begin{align*}
I(\xi)= \hat s^{-1}_0\xi_0+\dots+\hat s^{-1}_n\xi_n, \qquad \xi\in\g, \qquad \hat s_i=s_i, \qquad i\in\mathcal I.
\end{align*}

In~\cite{Su}, a remarkable explicit solution for the geodesic lines on the Lie group $G$ for the metrics $ds^2_s$ is given for all decompositions
$\g=\mathfrak p_0+\mathfrak p_1+\dots+\mathfrak p_n$ satisfying~\eqref{uslovi}. More precisely, Souris considered geodesics on homogeneous spaces $G/H$, but we set $H$ to be the neutral of the group $G$ in~\cite[Theorem 1.1]{Su}.
In our notation, the geodesic line with the initial position $g(0)=\bar g$, and the initial angular velocity
\begin{align*}
\omega(0)=(dL_{g})^{-1}\dot g\vert_{t=0}=\bar\omega
\end{align*}
is given by
\begin{align}\label{resenje}
g(t)=\bar g\exp(t\hat s_n \bar x)\exp(t(\hat s_{n-1}-\hat s_n)\bar x_{\g_{n-1}})\cdots \exp(t(\hat s_1-\hat s_2)\bar x_{\g_1})\exp(t(\hat s_0-\hat s_1)\bar x_0),
\end{align}
where the initial angular momentum $\bar x$ is
\begin{align*}
\bar x=I(\bar\omega)=\hat s^{-1}_0\bar\omega_0+\dots+\hat s^{-1}_n\bar\omega_n.
\end{align*}

From the perspective of dynamics, Souris considered the Lagrangian formulation of the problem on the tangent bundle $TG\cong G\times \g(g,\omega)$:
\begin{align}\label{lag}
\frac{d}{dt}\big(I(\omega)\big)=[I(\omega),\omega], \qquad \dot g=d(L_g)(\omega), \qquad  I(\omega)=\nabla L\vert_\omega,
\end{align}
where the left-invariant Lagrangian is given by $L(\omega)=\frac12\langle I\omega,\omega\rangle$.
In the Hamiltonian formulation,  the geodesic flow of the metric $ds^2_s$ is expressed as
\begin{align}\label{ham}
\dot x=[x,\omega], \qquad \dot g=d(L_g)(\omega), \qquad \omega=I^{-1}(x)=\nabla H\vert_x,
\end{align}
where the Hamiltonian $H=\frac12\langle I^{-1}x,x\rangle$ is the Legendre transformation of $L$.

Again, the system~\eqref{ham} falls within the class of problems studied by Bogoyavlensky, where $A_0(x_0)=\hat s_0 x_0$.
The Riemannian metrics $ds^2_s$ tame the sub-Riemannian metric $ds^2_{\mathcal D,s}$: the restriction of the metrics
$ds^2_s$ to the distribution $\mathcal D$ coincide with $ds^2_{\mathcal D,s}$ (e.g, see~\cite[page 32]{Mo}).
In the limit
\begin{align*}
\hat{s}_i\to 0, \qquad i\notin\mathcal I,
\end{align*}
the operator $I$ is singular, but the corresponding Hamiltonian $H$ becomes the Hamiltonian $H_{sR}$ of the sub-Riemannian metric $ds^2_{\mathcal D,s}$. Consequently, the Hamiltonian equations~\eqref{ham} reduce to the Hamiltonian equations of the normal sub-Riemannian geodesic flow given by~\eqref{Euler-0*},~\eqref{Euler-i*}, and~\eqref{KIN*}.
Moreover, the curve~\eqref{resenje} is well defined for $\hat s_i=0$, $i\notin\mathcal I$, and it represents the projection to $G$ of the solution $(g(t),x(t))$
of the system~\eqref{Euler-0*},~\eqref{Euler-i*},~\eqref{KIN*} with the initial conditions $g(0)=\bar g$, $x(0)=\bar x$ (see Theorem~\ref{pomocna}).
\end{proof}

Following Bogoyavlensky’s approach~\cite{B1}, instead of focusing solely on a compact connected Lie group $G$ equipped with an invariant scalar product $\langle\cdot,\cdot\rangle$,
we can also consider an arbitrary semisimple Lie group $G$ endowed with the Killing form $\langle\cdot,\cdot\rangle$. Along with the corresponding chains of Lie algebras~\eqref{filtration}, we impose the additional requirement that the restriction of the Killing form $\langle\cdot,\cdot\rangle$ to each subalgebra  $\g_i$ is non-degenerate.
We identify the dual spaces $\g_i\cong \g_i^*$ using the (generally pseudo-Euclidean) scalar product $\langle\cdot,\cdot\rangle$.

Assume that $G$ is a connected compact or a semi-simple Lie group with the associated chain of subalgebras~\eqref{filtration}, and with the notation introduced above. % from the compact case applied for a semi-simple case as well. %and without assuming that they correspond to the embedded Lie groups $G_i$, $i=1,\dots,n$.

With the notation~\eqref{oznake}, consider a left-invariant quadratic Hamiltonian function of the form
\begin{align*}
H(x)=\frac{s_0}2 \langle x_0,x_0\rangle+\frac{s_1}2 \langle x_1,x_1\rangle+\dots+\frac{s_n}2\langle x_n,x_n\rangle,
\end{align*}
for arbitrary real values of the parameters $s_0,s_1,\dots,s_n$. The Hamiltonian equations on $T^*G$ are:
\begin{align}
&\dot x_0 = 0, \label{Euler-0**}\\
&\dot x_i = [(s_i-s_0) x_0+(s_i-s_1) x_1+\dots+(s_i-s_{i-1})x_{i-1},x_i], \quad i=1,\dots,n, \label{Euler-i**}\\
&\dot g= d(L_g)(s_0 x_0+s_1 x_1+\dots +s_n x_n). \label{KIN**}
\end{align}

\begin{theorem}\label{pomocna}
\begin{enumerate}[wide,label=(\roman*)]
    \item\label{it:pomocna1} The solution of the Euler equations~\eqref{Euler-0**},~\eqref{Euler-i**}, with the initial condition  $x(0)=\bar x$,
is given by
\begin{align}\label{res}
 x_0(t)&=\bar x_0=\bar x_{\g_0}, \notag\\
 x_1(t)&=\Ad_{\exp(t(s_1-s_0)\bar x_{\g_0})}(\bar x_1),\notag\\
 x_2(t)&=\Ad_{\exp(t(s_1-s_0)\bar x_{\g_0})}\circ \Ad_{\exp(t(s_2-s_1)\bar x_{\g_1})} (\bar x_2), \\
 &\dots\notag\\
 x_n(t)&=\Ad_{\exp(t(s_1-s_0)\bar x_{\g_0})}\circ \Ad_{\exp(t(s_2-s_1)\bar x_{\g_1})}\circ\dots\circ \Ad_{\exp(t(s_n-s_{n-1})\bar x_{\g_{n-1}})} (\bar x_n).\notag
\end{align}
\item\label{it:pomocna2} The solution of the corresponding reconstruction problem on the Lie group~\eqref{KIN**}, with the initial condition $g(0)=\bar g$, is
\begin{align*}
g(t)=\bar g\exp(ts_n \bar x)\exp(t(s_{n-1}-s_n)\bar x_{\g_{n-1}})\cdots \exp(t(s_1-s_2)\bar x_{\g_1})\exp(t(s_0-s_1)\bar x_{\g_0}).
\end{align*}
\end{enumerate}
\end{theorem}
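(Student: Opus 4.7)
The plan is to verify both claims by direct computation: for part~\ref{it:pomocna1}, I inductively check that the proposed trajectory satisfies~\eqref{Euler-i**}, and for part~\ref{it:pomocna2}, I compute $g^{-1}\dot g$ for the proposed $g(t)$ via the product rule and show it equals $\omega(t) = s_0 x_0+\cdots+s_n x_n$. Initial conditions are immediate in both cases since the relevant products equal the identity at $t=0$, so uniqueness of ODE solutions finishes the job.

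For part~\ref{it:pomocna1}, I rewrite the ansatz as $x_i(t)=\Ad_{F_i(t)}(\bar x_i)$, where $F_i(t):=g_1(t)g_2(t)\cdots g_i(t)$ with $g_k(t):=\exp(t(s_k-s_{k-1})\bar x_{\g_{k-1}})$. Differentiating gives $\dot x_i=[\dot F_i F_i^{-1},x_i]$, so the task reduces to showing that $\dot F_i F_i^{-1}$ equals the angular velocity appearing in~\eqref{Euler-i**}. The key auxiliary identity, proved by induction on $j$, is
\begin{equation*}
x_0(t)+x_1(t)+\cdots+x_j(t)=\Ad_{F_j(t)}(\bar x_{\g_j}), \qquad j=0,1,\ldots,n,
\end{equation*}
and it follows from the observation that $\Ad_{g_k}(\bar x_{\g_{k-1}})=\bar x_{\g_{k-1}}$, since $g_k$ is generated by $\bar x_{\g_{k-1}}$ itself. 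Combining this identity with the product rule $\dot F_i F_i^{-1}=\sum_{k=1}^{i}\Ad_{F_{k-1}}(\dot g_k g_k^{-1})$ and $\dot g_k g_k^{-1}=(s_k-s_{k-1})\bar x_{\g_{k-1}}$, a telescoping in the coefficient of each $x_j(t)$ yields $\dot F_i F_i^{-1}=\sum_{j=0}^{i-1}(s_i-s_j)\,x_j(t)$, exactly as required.

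For part~\ref{it:pomocna2}, I write the proposed $g(t)$ as $\bar g\cdot h_n(t)h_{n-1}(t)\cdots h_0(t)$ with $h_n(t):=\exp(ts_n\bar x)$ and $h_j(t):=\exp(t(s_j-s_{j+1})\bar x_{\g_j})$ for $j<n$, and observe that $h_j=g_{j+1}^{-1}$, so $h_{j-1}\cdots h_0=F_j^{-1}$. The logarithmic derivative product rule then gives
\begin{equation*}
g^{-1}\dot g=\sum_{j=0}^{n-1}(s_j-s_{j+1})\,\Ad_{F_j(t)}(\bar x_{\g_j})+s_n\,\Ad_{F_n(t)}(\bar x_{\g_n}),
\end{equation*}
and the identity from part~\ref{it:pomocna1} turns this into $\sum_{j=0}^{n-1}(s_j-s_{j+1})\,(x_0(t)+\cdots+x_j(t))+s_n\,(x_0(t)+\cdots+x_n(t))$. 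A second telescoping shows the coefficient of each $x_k(t)$ equals $s_k$, so $g^{-1}\dot g=s_0 x_0+\cdots+s_n x_n=\omega(t)$, matching~\eqref{KIN**}.

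The main obstacle I anticipate is purely notational: keeping straight the two families of exponentials (the $g_k$ built from $s_k-s_{k-1}$ governing the Euler flow versus the $h_j$ built from $s_j-s_{j+1}$ appearing in the reconstruction), their inverse relationship $h_j=g_{j+1}^{-1}$, and the order of multiplication in the product rule. Once the identity $x_0(t)+\cdots+x_j(t)=\Ad_{F_j(t)}(\bar x_{\g_j})$ is isolated, both parts of the theorem reduce to elementary telescoping arguments. The verification itself does not explicitly invoke the filtration property~\eqref{uslovi}; that hypothesis enters only in securing that~\eqref{Euler-i**} is the correct decomposition of the underlying Euler equation~\eqref{Euler} in the first place.
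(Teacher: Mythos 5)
Your proof is correct, and for part~\ref{it:pomocna1} it is essentially the paper's argument in a cleaner packaging: the paper also verifies the ansatz by direct differentiation, using that $\Ad_{\exp(t(s_{k+1}-s_k)\bar x_{\g_k})}$ fixes $\bar x_{\g_k}$ and acts by Lie-algebra automorphisms, and its long chain of equalities is exactly your two telescopings. The one organizational difference is that you isolate the identity $x_0(t)+\dots+x_j(t)=\Ad_{F_j(t)}(\bar x_{\g_j})$ as a standalone lemma and then invoke the logarithmic-derivative product rule $\dot F_i F_i^{-1}=\sum_k \Ad_{F_{k-1}}(\dot g_k g_k^{-1})$, whereas the paper peels off the leftmost factor one step at a time and rediscovers the same identity inline; your version is easier to audit. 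The genuine added value is part~\ref{it:pomocna2}: the paper omits this proof entirely, remarking only that it is ``dual'' to Souris's Lagrangian computation in \cite[Theorem 1.1]{Su}, while you give a complete self-contained verification that $g^{-1}\dot g=s_0x_0+\dots+s_nx_n$ by reusing the same auxiliary identity through the relation $h_{j-1}\cdots h_0=F_j^{-1}$ and a second telescoping. This makes the Hamiltonian statement independent of the Lagrangian reference, which matters here because the paper explicitly needs the result for degenerate parameters $s_i=0$, where the Lagrangian side does not exist. Your closing remark is also accurate: the algebraic verification does not use~\eqref{uslovi} directly; that hypothesis is what guarantees that~\eqref{Euler-i**} is the componentwise form of the Euler equation and that each $x_i(t)=\Ad_{F_i(t)}(\bar x_i)$ stays in $\mathfrak p_i$, since $F_i$ is generated by elements of $\g_{i-1}$ and $[\g_{i-1},\mathfrak p_i]\subset\mathfrak p_i$.
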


{\begin{proof}
    Define $v_k=(s_{k+1}-s_k)\bar x_{\g_k}$ and introduce the notation
    \begin{align*}
        A_{[i,j)}^t:=\Ad_{\exp(t v_i)}\circ \Ad_{\exp(t v_{i+1})}\circ\ldots\circ \Ad_{\exp(t v_{j-1})}, \quad 0\le i\le j\le n,
    \end{align*}
with the convention $A^t_{[i,i)}:=\mathrm{Id}$.
Observe that
\begin{align*}
    A^t_{[i,i+1)}(\bar x_{\g_i})=\bar x_{\g_i},\quad A^t_{[i,j)}\circ A^t_{[j,k)}=A^t_{[i,k)}\quad\text{and}\quad A^t_{[i,j)}([x, y])=[A^t_{[i,j)}(x), A^t_{[i,j)} (y)].
\end{align*}
With this notation, equations~\eqref{res} take the form
\begin{align*}
   x_0(t)&=\bar x_0=\bar x_{\g_0},\\
   x_i(t)&=A^t_{[0,i)}(\bar x_i),\quad i=1,\ldots,n.
\end{align*}
We now check that $x_i(t)$ indeed satisfies equation~\eqref{Euler-i**}.
\begin{align*}
    \frac{d}{dt}x_i(t)&=\frac{d}{dt}A^t_{[0,i)}(\bar x_i)=\frac{d}{dt}\left(A^t_{[0,1)}\circ A^t_{[1,i)}(\bar x_i)\right)=\frac{d}{dt}\left(\exp(t v_0) A^t_{[1,i)}(\bar x_i)\exp(-t v_0)\right)\\
    &=A^t_{[0,1)}[v_0, A^t_{[1,i)}(\bar x_i)]+A^t_{[0,1)}\left(\frac{d}{dt}A^t_{[1,i)}(\bar x_i)\right)\\
    &=[v_0, A^t_{[0,i)}(\bar x_i)]+A^t_{[0,1)}\left(\frac{d}{dt}\left(A^t_{[1,2)}\circ A^t_{[2,i)}(\bar x_i)\right)\right)\\
    &=[v_0, A^t_{[0,i)}(\bar x_i)]+A^t_{[0,1)}\left([v_1, A^t_{[1,i)}(\bar x_i)]+A^t_{[1,2)}\left(\frac{d}{dt}A^t_{[2,i)}(\bar x_i)\right)\right)\\
    &=[v_0, A^t_{[0,i)}(\bar x_i)]+[A^t_{[0,1)} v_1, A^t_{[0,i)}(\bar x_i)]+A^t_{[0,2)}\left(\frac{d}{dt}A^t_{[2,i)}(\bar x_i)\right)\\
    &=\ldots=[v_0+A^t_{[0,1)} v_1+\ldots A^t_{[0,i-1)} v_{i-1}, A^t_{[0,i)}(\bar x_i)]=[\sum_{k=0}^{i-1} A^t_{[0,k)} v_k, x_i]\\
    &=[\sum_{k=0}^{i-1} (s_{k+1}-s_k)A^t_{[0,k)}\bar x_{\g_{k}}, x_i]=[\sum_{k=0}^{i-1} (s_{k+1}-s_k)A^t_{[0,k)}(\bar x_{\g_{k-1}}+\bar x_k), x_i]\\
    &=[\sum_{k=0}^{i-1} (s_{k+1}-s_k)(A^t_{[0,k)}\bar x_{\g_{k-1}}+A^t_{[0,k)}\bar  x_k), x_i]\\
    &=[\sum_{k=0}^{i-1} (s_{k+1}-s_k)(A^t_{[0,k-1)}(A^t_{[k-1,k)}\bar x_{\g_{k-1}})+ x_k), x_i]\\
    &=[\sum_{k=0}^{i-1} (s_{k+1}-s_k)(A^t_{[0,k-1)}\bar x_{\g_{k-1}}+ x_k), x_i]\\
    &=[\sum_{k=0}^{i-1} (s_{k+1}-s_k)(A^t_{[0,k-2)}\bar x_{\g_{k-2}}+x_{k-1}+ x_k), x_i]\\
    &=\ldots=[\sum_{k=0}^{i-1} (s_{k+1}-s_k)\sum_{j=0}^k x_j, x_i]=[\sum_{j=0}^{i-1} \sum_{k=j}^{i-1} (s_{k+1}-s_k)x_j, x_i]\\
    &=[\sum_{j=0}^{i-1} (s_{i}-s_j)x_j, x_i].
\end{align*}
This concludes the proof of~\ref{it:pomocna1}. Since the proof of statement~\ref{it:pomocna2} is dual to that of~\cite[Theorem 1.1]{Su}, it will be omitted.
\end{proof}
}

\begin{remark}
The statement is dual to Souris's solution of the system~\eqref{lag} in the Lagrangian formulation~\cite{Su}, which allows us to handle arbitrary values of the parameters $s_0,\dots,s_n$.
When some of these parameters are equal to zero, the Hamiltonian
$H(x)$ becomes singular, and consequently, the Lagrangian (obtained via the Legendre transform of $H$) ceases to exist.
We presented the proof of item~\ref{it:pomocna1} of Theorem~\ref{pomocna},
since it provides all solutions of
the Gel'fand--Cetlin systems on $\so(n)$ and $\uu(n)$, covering both regular and singular adjoint orbits~\cite{GS1, BMZ}.
\end{remark}

\begin{example}
If $G$ is compact, $\dim\g_0=1$, $s_0<0$, $s_1,\dots,s_n>0$, $H(x)$ represents the Hamiltonian function for the Lorentz
metric on $G$ with completely integrable geodesic flow and geodesics of the form~\eqref{resenje}.
\end{example}

\section{Reduction to homogeneous spaces}\label{redukcija}

In this section, we assume that $\mathcal I \subsetneqq \{1,\dots,n\}$. We consider the associated sub-Riemannian problem on the homogeneous space $G/K$, where $K=G_0$,
from the perspective of symplectic reduction.

As above, assume that
\begin{align*}
\mathfrak d=\bigoplus_{i\in\mathcal I} \mathfrak p_i
\end{align*}
generate $\g$ by commutation and consider the Hamiltonian
\begin{align*}
H_{sR}(g,x)=H_{sR}(x)=\frac{s_1}2\langle x_1,x_1\rangle+\dots+\frac{s_n}2\langle x_n,x_n\rangle, \quad s_i=0, i\notin \mathcal I.
\end{align*}
of the normal sub-Riemannian geodesic flow on $T^*G$.

Recall that the natural right $K$--action on $T^*G$ is Hamiltonian. The zero value level-set of the momentum map is given by
\begin{align*}
(T^*G)_0=\{(g,x)\in T^*G\cong G\times\g^*\, \vert \, \pr_{\mathfrak k}(x)=0\}, \qquad \mathfrak k=\g_0,
\end{align*}
and the Marsden-Weinstein reduced space $(T^*G)_0/K$ is diffeomorphic to the cotangent bundle of the homogeneous space $G/K$ endowed with the standard symplectic structure (see~\cite{MMR}).
Let
$\pi\colon G\to G/K$
be the natural projection and $o:=\pi(e)=K$ be the projection of the neutral $e$ of $G$. The tangent space $T_o(G/K)$ is naturally identified with the orthogonal complement of $\mathfrak k$ within $\g$ with respect to $\langle\cdot,\cdot\rangle$:
\begin{align*}
T_o(G/K)\cong \mathfrak p_1\oplus \mathfrak p_2\oplus \dots \oplus \mathfrak p_n.
\end{align*}

The Hamiltonian function $H_{sR}$ is both left $G$--invariant and right $K$--invariant.  The normal sub-Riemannian geodesic flow on $T^*G$ induces well defined Hamiltonian system on the reduced space $(T^*G)_0/K\cong T^*(G/K)$ with the Hamiltonian function $H_{sR,0}$ obtained from the restriction of $H_{sR}$ to $(T^*G)_0$.

The reduced system corresponds to the normal sub-Riemannian geodesic flow on the $G$--invariant bracket generating distribution $\mathcal D_0$,
\begin{align*}
\mathcal D_0\vert_o \cong \mathfrak d=\bigoplus_{i\in\mathcal I} \mathfrak p_i,
\end{align*}
equipped with the $G$--invariant sub-Riemannian structure $ds^2_{\mathcal D_0,s}$ induced by the scalar product~\eqref{sub-Rimanov proizvod}.
Thus, the normal sub-Riemannian geodesics $\gamma(t)$ on $(G/K,ds^2_{\mathcal D_0,s})$ are projections of the solutions $(g(t),x(t))$ of the normal sub-Riemannian geodesic flow on $T^*G$ that belong to the invariant subspace $(T^*G)_0$:
\begin{align*}
\gamma(t)=\pi(g(t))=\pi\big(\bar g\exp(ts_n\bar x)\exp(t(s_{n-1}-s_n)\bar x_{\g_{n-1}})\cdots \exp(t(s_1-s_2)\bar x_{\g_1})\big),
\end{align*}
where $\pr_{\mathfrak k} \bar x=0$.
We summarize the above discussion in the following statement.

\begin{theorem}\label{druga}
The sub-Riemannian geodesics on $(G/K,ds^2_{\mathcal D_0,s})$ staring at the origin $\gamma(0)=o$  are of the form
\begin{align}\label{homGeo}
\gamma(t)=\exp(ts_n\bar x)\cdot \exp(t(s_{n-1}-s_n)\bar x_{\g_{n-1}})\cdots \exp(t(s_1-s_2)\bar x_{\g_1})\cdot o,
\end{align}
where $\pr_{\mathfrak k} \bar x=0$.
\end{theorem}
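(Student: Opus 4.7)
The plan is to combine the symplectic reduction picture established in the paragraphs preceding the theorem with the explicit formula of Theorem~\ref{prva}\ref{it:prva2}. Because the normal sub-Riemannian Hamiltonian $H_{sR}$ is both left $G$-invariant and right $K$-invariant, Marsden--Weinstein reduction at the zero value of the right momentum map identifies $(T^*G)_0/K$ symplectomorphically with $T^*(G/K)$, and descends $H_{sR}$ to the sub-Riemannian Hamiltonian $H_{sR,0}$ of $ds^2_{\mathcal D_0,s}$. Consequently, every normal geodesic of $(G/K,ds^2_{\mathcal D_0,s})$ starting at $o$ is the $\pi$-image of a normal geodesic $g(t)$ on $(G,ds^2_{\mathcal D,s})$ with $g(0)=e$ whose initial covector $\bar x\in\g^*\cong\g$ satisfies $\pr_{\mathfrak k}\bar x=0$, i.e.\ $\bar x_0=0$.

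The next step is to substitute $\bar g=e$ and this constraint into Theorem~\ref{prva}\ref{it:prva2}, which gives
$$g(t)=\exp(ts_n\bar x)\exp(t(s_{n-1}-s_n)\bar x_{\g_{n-1}})\cdots \exp(t(s_1-s_2)\bar x_{\g_1})\exp(t(s_0-s_1)\bar x_0).$$
Since $\bar x_0=0$, the rightmost factor is the identity and drops out; equivalently, even without this observation, that factor belongs to $K$ and therefore acts trivially on $o=eK$ after projection. Using $\pi(g)=g\cdot o$ yields precisely the expression~\eqref{homGeo}.

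To close the argument, one must verify that every geodesic from $o$ arises in this fashion. This follows from surjectivity of the Marsden--Weinstein correspondence: under the natural identification $T^*_o(G/K)\cong\ann(\mathfrak k)\subset\g^*$, each initial covector at $o$ lifts to some $\bar x\in\g$ with $\bar x_0=0$, so letting $\bar x$ range over all such vectors exhausts all possible starting directions. The only point in the argument that is not essentially tautological is the identification of the reduced Hamiltonian on $T^*(G/K)$ with $H_{sR,0}$—i.e.\ the check that the reduced scalar product on $\mathfrak d\cong\mathcal D_0|_o$ is exactly~\eqref{sub-Rimanov proizvod}. This is classical Marsden--Weinstein machinery and has already been invoked implicitly in the setup preceding the theorem, so I do not expect it to present a genuine obstacle. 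In this sense Theorem~\ref{druga} is essentially a corollary of Theorem~\ref{prva}\ref{it:prva2} together with the reduction framework, and a full proof reduces to carefully recording the two steps above.
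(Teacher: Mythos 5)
Your argument is correct and follows the same route as the paper: the theorem is obtained there exactly by Marsden--Weinstein reduction of the right $K$-action at the zero momentum level, identifying $(T^*G)_0/K$ with $T^*(G/K)$, and then projecting the explicit geodesics of Theorem~\ref{prva}\ref{it:prva2} with $\pr_{\mathfrak k}\bar x=0$, so the final factor $\exp(t(s_0-s_1)\bar x_0)$ disappears. The paper presents this as a summary of the preceding discussion rather than a separate proof, and your write-up matches it step for step, including the observation that initial covectors at $o$ range over $\ann(\mathfrak k)$.
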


\begin{remark}
The solutions stated in
Theorems~\ref{prva},~\ref{pomocna},  and~\ref{druga} are valid
for all filtrations~\eqref{filtration} without requiring the corresponding Lie subgroups $G_i$, with $\g_i=Lie(G_i)$, to be closed.
The only essential assumption is that $K=G_0$ is a closed Lie subgroup of $G$.
\end{remark}

The normal sub-Riemannian geodesics~\eqref{homGeo} arise as limits of the Riemannian geodesics on $G/K$
endowed with the submersion metrics induced from $(G,ds^2_s)$, where $ds^2_s$ is left $G$-invariant and right $K$-invariant Riemannian metric defined by~\eqref{rimanova} (see~\cite{Su}).

The commutative integrability
of geodesic flows on homogeneous spaces, by means of integrals polynomial in momenta related to filtrations
$\g_0<\g_1<\dots<\g_n$,
has been studied in~\cite{Th, BJ1, BJ3}.

\section{Examples}\label{sec4}

\subsection{$SU(3)<G_2<SO(7)$}
  Let us consider the chain of algebras $\mathfrak{su}(3)< \mathfrak{g}_2<\mathfrak{so}(7)$. Denote the standard basis of the Lie algebra $\mathfrak{so}(7)$ by $\{\e_{ij}=\e_i\wedge \e_j\mid 1 \leq i < j \leq 7\}$. With respect to this basis, the vectors
  \begin{align*}
    P_0&=\e_{32}+\e_{67}, &
    P_1&=\e_{13}+\e_{57}, &
    P_2&=\e_{21}+\e_{74}, &
    P_3&=\e_{14}+\e_{72},\\
    P_4&=\e_{51}+\e_{37}, &
    P_5&=\e_{35}+\e_{17}, &
    P_6&=\e_{43}+\e_{61}, &&\\
    Q_0&=\e_{45}+\e_{67},&
    Q_1&=\e_{64}+\e_{57}, &
    Q_2&=\e_{65}+\e_{74},&
    Q_3&=\e_{36}+\e_{72},\\
    Q_4&=\e_{26}+\e_{37},&
    Q_5&=\e_{35}+\e_{42},&
    Q_6&=\e_{43}+\e_{52},&&
  \end{align*}
  form a basis of the exceptional Lie algebra $\mathfrak{g}_2$, while the vectors $P_0, Q_0,\ldots,Q_6$ span the algebra $\mathfrak{su}(3)$. Then:
  \begin{align*}
      \mathfrak p_0 &=\mathfrak{su}(3)=\Span\{P_0,Q_0,\ldots,Q_6\},\qquad \mathfrak p_1=(\mathfrak{su}(3))^\perp=\Span\{P_1,\ldots,P_6\},\\
      \mathfrak p_2&=(\mathfrak{g}_2)^\perp=\Span\{R_0,\ldots,R_6\}\\
      &\phantom{=(\mathfrak{g}_2)^\perp(}=\Span\{ \e_{71}+\e_{24}+\e_{35},
      \e_{16}+\e_{25}+\e_{43}, \e_{51}+\e_{26}+\e_{73},\\
      &\phantom{=(\mathfrak{g}_2)^\perp=\Span((}
      \e_{14}+\e_{27}+\e_{36}, \e_{32}+\e_{45}+\e_{76}, \e_{31}+\e_{46}+\e_{57}, \e_{21}+\e_{47}+\e_{65}\}.
  \end{align*}

Hence, we can take $\mathfrak{d}=\mathfrak p_2$ ($s_0=s_1=0, s_2\ne 0$),
$\mathfrak d=\mathfrak p_1\oplus \mathfrak p_2$ ($s_0=0, s_1,s_2\ne 0$), or $\mathfrak d=\mathfrak p_0\oplus\mathfrak p_2$ ($s_1=0, s_0,s_2\ne 0$) and the corresponding left-invariant sub-Riemannian metrics $ds^2_{\mathcal{D},s}$ with
normal sub-Riemannian geodesic lines described in Theorem~\ref{prva}.

Further, let us consider the homogeneous space $SO(7)/SU(3)$. We have the homogeneous fibration
\begin{align*}
G_2/SU(3)\cong S^6 \longrightarrow
SO(7)/SU(3) \longrightarrow
SO(7)/G_2\cong {\mathbb RP}^7
\end{align*}
with the horizontal space $\mathcal D_0$
induced from $\mathfrak d=\mathfrak p_2$
($s_0=s_1=0, s_2>0$).
Theorem~\ref{druga} provides the
normal geodesic lines for the
sub-Riemannian metric $ds^2_{\mathcal D_0,s}$.

\subsection{$U(1)<SU(2)<U(2)<SO(4)$}\label{ssec:42}
Consider the chain
of subalgebras
\begin{align*}
\g_0=\uu(1)<\g_1=\mathfrak{su}(2)<\g_2=\uu(2)<\g_3=\so(4),
\end{align*}
given by
\begin{align*}
&\uu(2)=\Span\{\e_{12}, \e_{34}, \e_{14}-\e_{23},\e_{13}+\e_{24}\}, \\
&\mathfrak{su}(2)=\Span\{\e_{12} - \e_{34}, \e_{14}-\e_{23},\e_{13}+\e_{24}\},\\
&\uu(1)=\Span\{\e_{12} - \e_{34}\},
\end{align*}
and the orthogonal decomposition
\begin{align*}
&\so(4)=\uu(1)\oplus\mathfrak{p_1}\oplus\mathfrak p_2\oplus \mathfrak p_3,
\quad\,\,\,\mathfrak p_1=\Span\{\e_{14}-\e_{23},\e_{13}+\e_{24}\}, \\
&\mathfrak p_2=\Span\{\e_{12}+\e_{34}\},\qquad \qquad \mathfrak p_3=\Span\{\e_{14}+\e_{23},\e_{13}-\e_{24}\}.
\end{align*}

A generic Hamiltonian related to the filtration is
\begin{align*}
H=\frac{1}{4}&\left({s_0}(x_{12}-x_{34})^2+{s_1}\big((x_{14}-x_{23})^2+(x_{13}+x_{24})^2\big)\right.\\
&\left.+
{s_2}(x_{12}+x_{34})^2+{s_3}\big((x_{14}+x_{23})^2+(x_{13}-x_{24})^2\big)\right),\quad s_0,s_1,s_2,s_3\geq 0.
\end{align*}
Thus, we obtain a two-parameter family of sub-Riemannian structures on the distribution $\mathcal D$ associated with:
\begin{align*}
\mathfrak d=\mathfrak p_1\oplus\mathfrak p_3=\Span\{\e_{13},\e_{14},\e_{23},\e_{24}\} \qquad (s_0=0, s_2=0, s_1,s_3>0).
\end{align*}

Note that, by employing the symmetric pair $(\so(4), \so(2) \oplus \so(2))$, we obtain a one-parameter family of sub-Riemannian structures on $\mathcal D$, which correspond to the restrictions of bi-invariant metrics.
Also note that the decomposition
$\so(4)=\mathfrak{su}(2)\oplus(\mathfrak p_2\oplus \mathfrak p_3)$ coincides with a well known decomposition of
$\so(4)$ into direct sum of two copies of $\so(3)$:
$\so(4)\cong \so(3)\oplus\so(3)$.

Additionally, we obtain a three-parameter family of sub-Riemannian structures on the codimension-1 distribution $\mathcal D$, related to:
\begin{align*}
\mathfrak d=\mathfrak p_1\oplus\mathfrak p_{2}\oplus\mathfrak p_3 \qquad (s_0=0, s_1, s_2,s_3>0).
\end{align*}
Solution curves of the Euler equation for the distributions corresponding to $\mathfrak d=\mathfrak p_1\oplus\mathfrak p_3$ are displayed in Figure~\ref{fig:example2} (\emph{left}), while those for $\mathfrak d=\mathfrak p_1\oplus\mathfrak p_{2}\oplus\mathfrak p_3$ appear in Figure~\ref{fig:example2} (\emph{right}).

Moreover, by applying the standard filtrations
\begin{align*}
& \g_0=\so(2) <\g_1=\so(3) < \g_2=\so(4), \\
& \g_0=\so(2)<\g_1=\so(2)\oplus\so(2)<\g_2=\so(4),
\end{align*}
and $\mathfrak d=\so(2)^\perp$, we obtain two 2-parameter families of sub-Riemannian structures on the corresponding codimension-1 distribution on $SO(4)$.

\begin{figure}[h]
    \centering
    \includegraphics[width=0.43\linewidth]{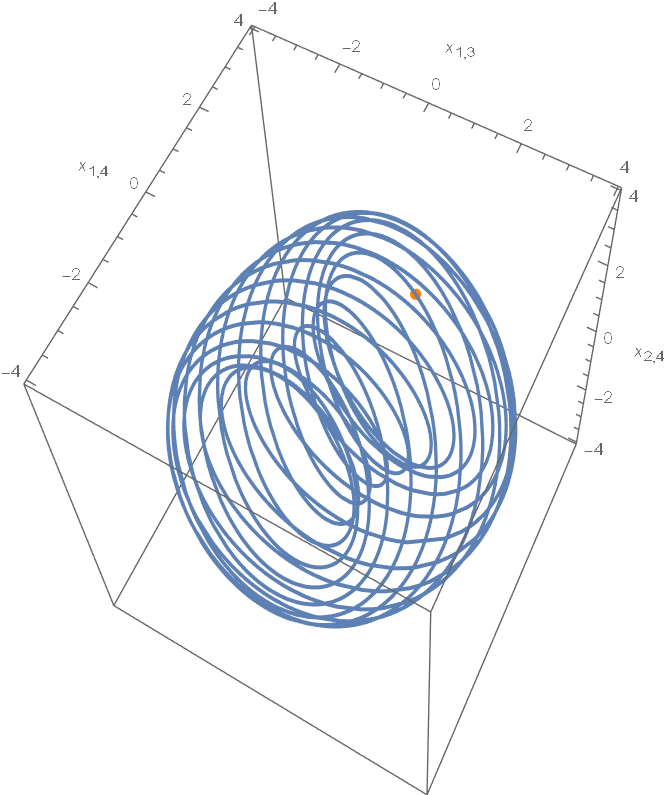}
    \includegraphics[width=0.43\linewidth]{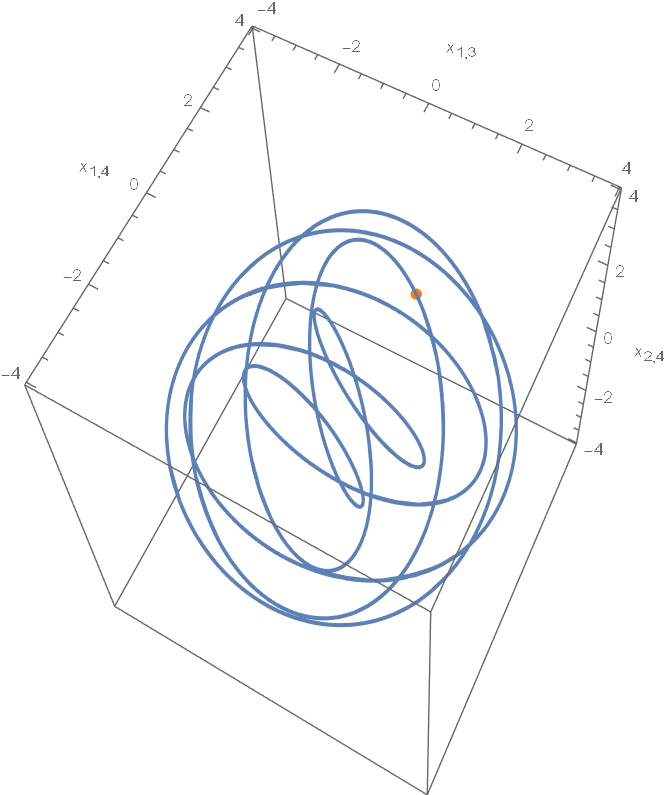}
    \caption{Projection of the solution curves of the Euler equation on the subspace $\Span\{\e_{13},\e_{14},\e_{24}\}$ for the distributions associated with $\mathfrak d=\mathfrak p_1\oplus\mathfrak p_3$ (\emph{left}), and $\mathfrak d=\mathfrak p_1\oplus\mathfrak p_{2}\oplus\mathfrak p_3$ (\emph{right}), with the same initial conditions.}
    \label{fig:example2}
\end{figure}

\subsection{Brackets generating distributions of $SO(n)$}

In the two examples above, as well as in the next section, we have presented various examples of $SO(n)$--invariant bracket generating distributions on $SO(n)$. Moreover, since all considered filtrations consist of multiplicity-free, almost multiplicity-free, and symmetric pairs of Lie subalgebras, the corresponding normal geodesic flows are integrable in the classical commutative sense via polynomial first integrals in the momenta~\cite{JSV, Mik}.

However, this represents only a subset of all $SO(n)$--invariant bracket generating distributions.

We have the following statement.

\begin{lemma}\label{lem4}
   The minimal dimension of a completely nonholonomic distribution associated with the Lie algebra $\mathfrak{so}(n)$ is 2.
\end{lemma}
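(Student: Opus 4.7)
The plan is to prove the lemma in two steps: dismiss dimension one, and then exhibit an explicit two-dimensional bracket-generating subspace. The statement is substantive only for $n\ge 3$; for $n=2$ the algebra is abelian and one-dimensional, so the claim is either trivial or vacuous.

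For the lower bound, a one-dimensional subspace $\mathfrak{d}=\Span\{X\}$ is automatically closed under the bracket since $[X,X]=0$, so its Lie hull equals itself. This cannot coincide with $\mathfrak{so}(n)$ once $n\ge 3$, because $\dim\mathfrak{so}(n)\ge 3$.

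For the upper bound I would take
\[
X=\e_{12},\qquad Y=\sum_{i=2}^{n-1}\e_{i,i+1},
\]
and verify that $\mathrm{Lie}\langle X,Y\rangle=\mathfrak{so}(n)$ by an induction that walks the second index along the first row. The first step is $[\e_{12},Y]=[\e_{12},\e_{23}]=\e_{13}$, since $\e_{12}$ commutes with every $\e_{i,i+1}$ for $i\ge 3$. Then, for $3\le k\le n-1$, in the sum defining $Y$ only the two neighbours $\e_{k-1,k}$ and $\e_{k,k+1}$ share an index with $\e_{1k}$, so
\[
[\e_{1k},Y]=-\e_{1,k-1}+\e_{1,k+1}.
\]
Induction on $k$, combined with the elements $\e_{1,k-1}$ already produced, places every $\e_{1k}$ for $k=2,\dots,n$ inside the bracket closure. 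Finally, the relations $[\e_{1j},\e_{1k}]=-\e_{jk}$ recover the remaining root vectors $\e_{jk}$ with $2\le j<k\le n$, so the subspace $\Span\{X,Y\}$ is indeed bracket-generating.

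The only mildly delicate point is getting the signs right in the three identities
\[
[\e_{1k},\e_{k-1,k}]=-\e_{1,k-1},\quad [\e_{1k},\e_{k,k+1}]=\e_{1,k+1},\quad [\e_{1j},\e_{1k}]=-\e_{jk},
\]
each of which follows directly from $\e_{ij}=E_{ij}-E_{ji}$ and $[E_{ij},E_{kl}]=\delta_{jk}E_{il}-\delta_{il}E_{kj}$. Beyond this routine matrix-unit verification, nothing really obstructs the argument: the essential idea is that the ``ladder'' $Y$ shifts the single generator $\e_{12}$ across the whole first row of root vectors, after which the standard bracket relations of $\mathfrak{so}(n)$ close the construction.
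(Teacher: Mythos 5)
Your proposal is correct and follows essentially the same route as the paper: you use the identical pair of generators ($\e_{12}$ together with the ladder $\sum_{i=2}^{n-1}\e_{i,i+1}$), walk the bracket along the first row to produce all $\e_{1k}$ (your identity $[\e_{1k},Y]=-\e_{1,k-1}+\e_{1,k+1}$ is exactly the paper's recursion $v_j=v_{j-2}+[v_1,v_{j-1}]$ in disguise), and then recover the remaining $\e_{jk}$ from brackets of first-row elements. The only addition is your explicit (and correct) remark that a one-dimensional subspace is abelian and hence cannot be bracket-generating, which the paper leaves implicit.
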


\begin{proof}
   Denote the standard basis of the Lie algebra $\mathfrak{so}(n)$ by $\{\e_{ij}=\e_i\wedge \e_j=-\e_{ji}\mid 1 \leq i < j \leq n\}$. Then the structural equations are given by:
   \begin{align}\label{eq:kom_son}
       [\e_{ij}, \e_{kl}]=\delta_{jk}\e_{il}-\delta_{ik}\e_{jl}+\delta_{il}\e_{jk}-\delta_{jl}\e_{ik},\qquad 1\leq i,j,k,l\leq n.
   \end{align}

   Set:
   \begin{align*}
       v_1 &= \sum_{1<k<n}\e_{k,k+1},& v_2&=\e_{12},\\
       v_3 &=[v_1,v_2]=-\e_{13},&
       v_j&=v_{j-2}+[v_1,v_{j-1}],\quad j=4,\ldots, n.
   \end{align*}
   Then it follows directly that:
   \begin{align*}
       \e_{1j}&= (-1)^j v_j,\quad 1<j\leq n,\\
       \e_{ij}&=[ \e_{1j}, \e_{1i}]=(-1)^{i+j}[v_j, v_i],\quad 1< i < j \leq n.
   \end{align*}

   Hence, $\mathfrak d=\Span\{v_1, v_2\}$ generates completely nonholonomic distribution $\mathcal{D}$.
\end{proof}

\begin{remark}
A more general statement than Lemma~\ref{lem4} holds. Namely, given an arbitrary element $X$ of a simple Lie algebra, there exists a $2$--dimensional completely nonholonomic distribution containing $X$. This follows from the results in \cite{Tud}.
\end{remark}

Consider the left-invariant distribution
$\mathcal D$
related to the distribution
\[
\mathfrak d=\Span\{v_1, v_2\}=\Span\{\e_{23}+\e_{34},\e_{12}\}
\]
on $SO(4)$. We define the sub-Riemannian structure $ds^2_{\mathcal D,\nu}$
by the Hamiltonian
\begin{align*}
H_{sR}(x)=\frac12
\big(\nu_1(x_{23}+x_{34})^2+\nu_2 x_{12}^2\big), \qquad \nu_1,\nu_2>0.
\end{align*}
Then
\begin{align*}
\omega=\nabla H_{sR}=\nu_1(x_{23}+x_{34})(\e_{23}+\e_{34})+\nu_2 x_{12}\e_{12}\in\mathfrak d.
\end{align*}

The normal geodesic flow is given by:
\begin{align}\label{novi0}
\begin{aligned}
    &\dot x=[x,\omega]=[x,\nu_1(x_{23}+x_{34})(\e_{23}+\e_{34})+\nu_2 x_{12}\e_{12}], \\
&\dot R=R\cdot(\nu_1(x_{23}+x_{34})(\e_{23}+\e_{34})+\nu_2 x_{12}\e_{12}), \qquad (R,x)\in SO(4)\times\so(4).
\end{aligned}
\end{align}
In coordinates $x_{ij}$, the Euler equation takes the form:
\begin{align}\label{novi}
\begin{aligned}
&\dot x_{12}=-\nu_1 x_{13}(x_{23}+x_{34}),\\
&\dot x_{13}=\nu_1 (x_{12}-x_{14})(x_{23}+x_{34})-\nu_2 x_{12}x_{23},\\
&\dot x_{14}=\nu_1 x_{13}(x_{23}+x_{34})-\nu_2 x_{12}x_{24},\\
&\dot x_{23}=-\nu_1 x_{24}(x_{23}+x_{34})+\nu_2 x_{12}x_{13},\\
&\dot x_{24}=\nu_1 (x_{23}-x_{34})(x_{23}+x_{34})+\nu_2 x_{12}x_{14},\\
&\dot x_{34}=\nu_1 x_{24}(x_{23}+x_{34}).
\end{aligned}
\end{align}
We have three integrals:
 the Hamiltonian $H_{sR}$ and the Casimirs
\begin{align*}
I_1=x_{12}^2+x_{13}^2+x_{14}^2+x_{23}^2+x_{24}^2+x_{34}^2, \quad
I_2=x_{12}x_{34}-x_{13}x_{24} +x_{14}x_{23}.
\end{align*}
For the complete integrability we need the fourth
independent integral.

\begin{proposition}
The system~\eqref{novi} admits no polynomial integrals of degree up to 6 that are independent from the quadratic integrals: $H_{sR}$, $I_1$, $I_2$.
\end{proposition}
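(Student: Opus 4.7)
The proposition is a finite, computer-algebra style statement: check that among polynomials of degree at most $6$ in the six variables $x_{ij}$, the only first integrals of the system~\eqref{novi} are the polynomials in $H_{sR}$, $I_1$, $I_2$. My plan is to reduce it to a sequence of finite-dimensional linear algebra problems, exploit grading and symmetry to keep them manageable, and then rule out new integrals degree by degree.

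First I would observe that the right-hand sides of~\eqref{novi} are homogeneous quadratic in the $x_{ij}$, so the flow preserves the polynomial grading by total degree in $x$. Consequently, it suffices to look for homogeneous integrals of each degree $d=1,2,\dots,6$ independently. Let $V_d$ denote the space of homogeneous polynomials of degree $d$ in the six variables $x_{ij}$, with $\dim V_d=\binom{d+5}{5}$; so $\dim V_1=6,\dim V_2=21,\dots,\dim V_6=462$. I would write $F\in V_d$ with undetermined coefficients $c_\alpha$, compute
\begin{align*}
\dot F=\sum_{i<j}\frac{\partial F}{\partial x_{ij}}\dot x_{ij},
\end{align*}
using~\eqref{novi}, and obtain a homogeneous polynomial of degree $d+1$ in $x$ whose coefficients are linear forms in the $c_\alpha$ with coefficients polynomial in $\nu_1,\nu_2$. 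The condition $\dot F\equiv 0$ becomes a linear system $M_d(\nu)\,c=0$ to be solved exactly (say, in a symbolic computer algebra system).

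Next I would use two simplifications. First, the two Casimirs are preserved for every left-invariant Hamiltonian on $\so(4)$, so the statement is invariant under $(c\mapsto c+\alpha H_{sR}+\beta I_1+\gamma I_2$-type additions of lower-degree integrals times polynomials in $H_{sR},I_1,I_2$ of the right total degree); I would quotient $V_d$ by the image of the multiplication map
\begin{align*}
\bigoplus_{2a+2b+2c=d} V_{d-2a-2b-2c}\otimes \R\cdot H_{sR}^a I_1^b I_2^c\longrightarrow V_d
\end{align*}
restricted to the subspace generated by monomials in $H_{sR},I_1,I_2$ of total degree $d$, and check that the quotient contains no nontrivial first integral. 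Equivalently, I would verify that the dimension of $\ker M_d(\nu)$ for generic $(\nu_1,\nu_2)$ equals the number of linearly independent monomials in $H_{sR},I_1,I_2$ of total degree $d$: namely $0,3,0,6,0,10$ for $d=1,\dots,6$. For $d$ odd there are no candidate monomials in $H_{sR},I_1,I_2$ of that total degree (all three are even-degree), so I would need to verify $\ker M_d(\nu)=0$ for $d=1,3,5$, which should be a quick check.

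The main obstacle is the size of the computation at $d=6$: the system $M_6(\nu)$ has $462$ unknowns and, after expansion, several thousand equations in two parameters $\nu_1,\nu_2$. I would handle this by either specializing $(\nu_1,\nu_2)$ to several generic numerical pairs and checking that the kernel dimension stabilizes at exactly $10$ (proving no extra integral for those values), then arguing by upper semicontinuity of $\dim\ker M_6(\nu_1,\nu_2)$ in the parameters that no additional integral can appear at generic $(\nu_1,\nu_2)$; or by performing the elimination symbolically using a Gr\"obner-style reduction exploiting the obvious $\mathbb Z_2$-symmetry of~\eqref{novi} under $(x_{23}\leftrightarrow x_{34},\,x_{13}\leftrightarrow x_{14})$ (fixing the Hamiltonian) to block-diagonalize $M_d$ into symmetric and antisymmetric parts. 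Once the kernel dimensions match the prescribed counts, the proposition follows.
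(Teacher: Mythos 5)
First, note that the paper itself gives no proof of this proposition: it is asserted without argument (the surrounding text explicitly says a detailed study of~\eqref{novi} is out of the scope of the paper), so it rests on an unreported computer-algebra verification. Your plan is exactly the computation one would run to certify it, and its skeleton is sound: the right-hand sides of~\eqref{novi} are homogeneous quadratic, so it suffices to treat homogeneous integrals degree by degree; the spaces $V_d$ have the dimensions you state; and the target kernel dimensions $0,3,0,6,0,10$ for $d=1,\dots,6$ are the correct counts of monomials in $H_{sR},I_1,I_2$ (one should also record that these monomials are linearly independent, which follows from the algebraic independence of the three quadratics). Showing that every polynomial integral of degree at most $6$ lies in $\mathbb{R}[H_{sR},I_1,I_2]$ does imply the stated functional dependence.

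There are, however, two genuine gaps. The first is that what you have written is a description of a computation, not the computation: nothing is actually verified, and for a statement of this kind the verification \emph{is} the proof, so as it stands the proposition is not established. The second concerns the parameters. Specializing $(\nu_1,\nu_2)$ numerically and invoking upper semicontinuity of $\dim\ker M_d(\nu)$ proves the claim only for \emph{generic} $(\nu_1,\nu_2)$, whereas the proposition concerns the system for the given $\nu_1,\nu_2>0$; to cover all admissible values you must either carry out the elimination symbolically and control the locus where the rank of $M_d(\nu)$ drops, or first use the scaling symmetry of the quadratic system to reduce to the single essential parameter $\nu_2/\nu_1$ and still treat its exceptional values. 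Finally, a smaller but concrete error: the map $x_{23}\leftrightarrow x_{34}$, $x_{13}\leftrightarrow x_{14}$ is \emph{not} a symmetry of~\eqref{novi} (it sends the right-hand side of the first equation to $-\nu_1x_{14}(x_{23}+x_{34})$, which differs from $-\nu_1x_{13}(x_{23}+x_{34})$), so the proposed block-diagonalization of $M_d$ based on it would fail; if you want to shrink the $d=6$ system you need to find an actual symmetry or work directly in the quotient by the ideal generated by $H_{sR},I_1,I_2$.
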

\vspace{-1em}
\begin{proof}
Since the Hamiltonian $H_{sR}$ is homogeneous, any polynomial first integral decomposes into homogeneous components, each of which must itself satisfy the first integral condition; therefore, it suffices to restrict attention to homogeneous polynomials.

Let $p_d(x)$ be a homogeneous polynomial of degree $d$, considered as a candidate for a first integral. From~\eqref{novi}, this leads to a system of equations that is linear in the coefficients of $p_d(x)$. A direct, though somewhat lengthy, computation shows that when $d$ is odd, this system admits only the trivial solution. In contrast, when $d$ is even, any nontrivial solution corresponds to a combination of the quadratic integrals $H_{sR}$, $I_1$, and $I_2$.
The complexity of this system grows exponentially; for instance, when $d=6$, it consists of 755 equations in 462 variables.
\end{proof}

A detail study of the system~\eqref{novi} is out of the scope of the paper. We observe a connection to systems related with chains of subalgebras. The system~\eqref{novi} has the invariant subspace
\begin{align*}
\so(3)=\{x_{12}=0, \, x_{13}=0, \, x_{14}=0\}=\Span\{\e_{23}, \e_{24}, \e_{34}\},
\end{align*}
where it takes the form
\begin{gather}\label{novi1}
\begin{aligned}
\dot x_{23}&=-\nu_1 x_{24}(x_{23}+x_{34}),\\
\dot x_{24}&=\nu_1 (x_{23}-x_{34})(x_{23}+x_{34}),\\
\dot x_{34}&=\nu_1 x_{24}(x_{23}+x_{34}).
\end{aligned}
\end{gather}
The solution curves are the points of intersection between the cylinder $(x_{23}-x_{34})^2+2x_{24}^2=c_1$ and the plane $x_{23}+x_{34}=c_2$.

Together with the system~\eqref{novi1}, let us consider the filtration
\begin{align*}
\g_0=\Span\{\e_{23}+\e_{34}\}<\g_1=\so(3)=\g_0 \oplus \Span\{\e_{23}-\e_{34}, \e_{24}\}<\g_2=\so(4).
\end{align*}

We have
\begin{align*}
&x_0=\frac12(x_{23}+x_{34})(\e_{23}+\e_{34}), \qquad
x_1=\frac12(x_{23}-x_{34})(\e_{23}-\e_{34})+x_{24}\e_{24},\\
&x_2=x_{12}\e_{12}+x_{13}\e_{13}+x_{14}\e_{14},
\end{align*}
and the system~\eqref{Euler-0**},~\eqref{Euler-i**},
\begin{align*}
\dot x_0 = 0,
\qquad \dot x_1 = [(s_1-s_0) x_0,x_1],
\qquad \dot x_2 = [(s_2-s_0) x_0+(s_2-s_0)x_1,x_2],
\end{align*}
restricted to $\so(3)=\{x_2=0\}$ and for $s_1=0$ and $s_0=2\nu_1$,
coincides with~\eqref{novi1}.

Thus, we can use the solutions described in Theorem~\ref{pomocna}:
\begin{align}
\nonumber & x_0(t)=\bar x_0=\bar x_{\g_0}, \\
\label{R(t)} & x_1(t)=\Ad_{\exp(-2\nu_1)\bar x_{\g_0}}(\bar x_1),\\
\nonumber &R(t)=\bar R\cdot\exp(2\nu_1t\bar x_0).
\end{align}
to describe solutions of the normal sub-Riemannian geodesic flow~\eqref{novi0} with the initial conditions $\bar x\in\so(3)$, $R(0)=\bar R$.

\begin{figure}[h]
    \centering
    \includegraphics[width=0.43\linewidth]{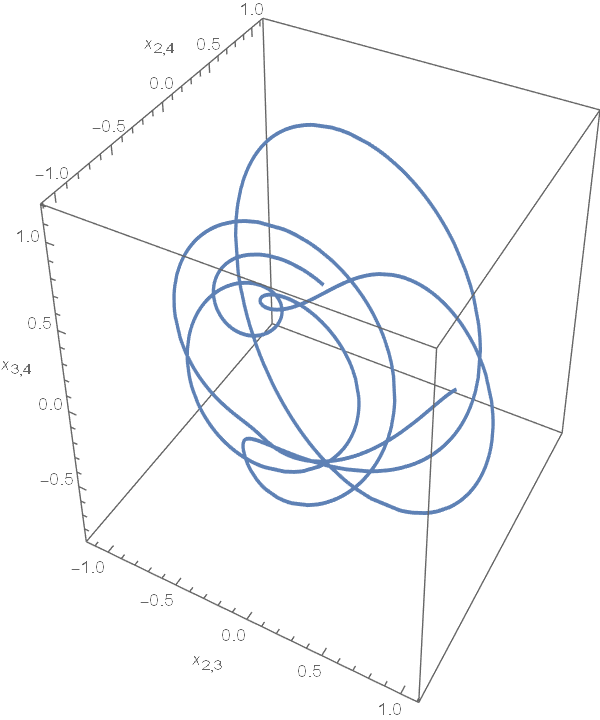}
    \includegraphics[width=0.43\linewidth]{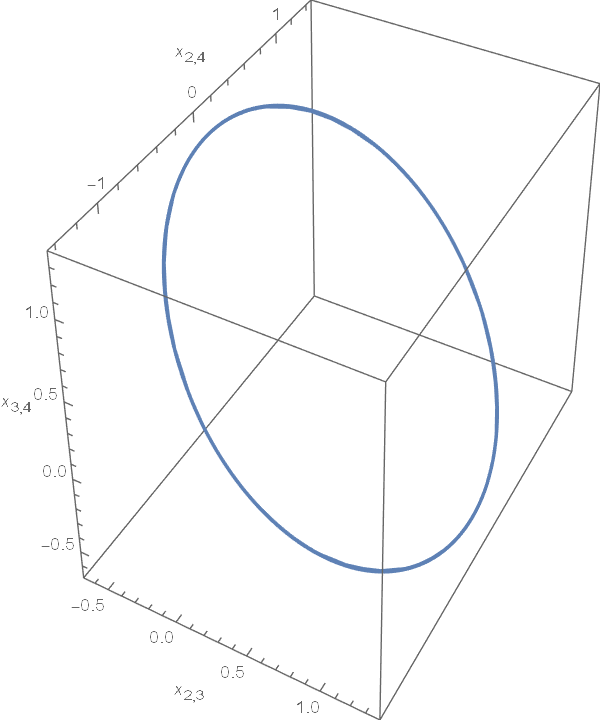}
    \caption{Projection of the integral curve of the system~\eqref{novi} on the subspace $\so(3)=\Span\{\e_{23},\e_{24},\e_{34}\}$ and the level set $I_2=-\frac{1}{2}$ (\emph{left}), and $I_2=0$ (\emph{right}). In the later case, the integral curve belongs to~$\so(3)$.}
    \label{fig:example1}
\end{figure}

We can demonstrate this example by plotting the projection of the integral curve of the system~\eqref{novi} on the subspace $\Span\{\e_{23},\e_{24},\e_{34}\}$. For the metric parameters $\nu_1=2\nu_2=1$ and the level set $H_{sR}=\frac{1}{2}$, we can set the value of the first integral to be $I_1=2$ and examine two distinct cases. The first case corresponds to the initial conditions $x_{12}=1$, $x_{13}=x_{14}=0$, $x_{23}=-x_{34}=\frac{1}{2}$, and $x_{24}=\frac{1}{\sqrt 2}$, which results in a curve on the level set $I_2=-\frac{1}{2}$. The solution curve suggests that the system~\eqref{novi} is not integrable (see Figure~\ref{fig:example1} (\emph{left})). The second case relates to the system~\eqref{novi1}, representing a special case of the integral curves of system~\eqref{novi} on the level set $I_2=0$. The corresponding curve, shown in Figure~\ref{fig:example1} (\emph{right}), is generated from the initial conditions $x_{12}=x_{13}=x_{14}=x_{34}=0$, $x_{23}=\frac{1}{\sqrt 2}$, and $x_{24}=\frac{\sqrt 3}{\sqrt 2}$.

\begin{remark}
In~\cite{BAB}, the following natural question is raised: is  there a left-invariant sub-Riemannian structure of $\rank k$ with an integrable normal geodesic flow on any (semi-simple) Lie group $G$ and any admissible $k$? The example above indicates that for $SO(n)$, the answer is negative.
However, for the orthogonal group and for any $k$, $n-1\leq k <\frac{n(n-1)}{2}$, the answer is positive. For instance, one can consider the natural chains of subalgebras of the form
\begin{align}\label{lanci}
\begin{aligned}
\so(l_1) &< \so(l_1)\oplus \so(l_2) < \so(l_1+l_2) <
\so(l_1+l_2)\oplus \so(l_3)<\so(l_1+l_2+l_3)<\dots\\ & <\so(l_1+\dots+l_{p-1})<\so(l_1+\dots+l_{p-1})\oplus \so(l_p)<\so(n),
\end{aligned}
\end{align}
for all possible decompositions
$n=l_1+l_2+\dots+l_p$, $l_1\ge 2$, $l_i \ge 1$, $1<i\le p$, $1<p<n$, and all possible sets $\mathcal I$ that define $\mathfrak d$ by~\eqref{defd}.
\end{remark}

\begin{remark}
Note that the normal geodesic line $R(t)=\bar R\cdot \exp(2\nu_1 t\bar x_0)$, given in~\eqref{R(t)}, represents a homogeneous geodesic. Specifically, it is the orbit of a one-parameter subgroup of isometries. Homogeneous geodesics in sub-Riemannian manifolds have been recently studied by Podobryaev~\cite{Po}. A sub-Riemannian manifold is called a \emph{geodesic orbit} if every normal geodesic is homogeneous~\cite{Po}.

As an example, consider the sub-Riemannian $\mathfrak p_1\oplus\g_0$--problem on $G$, noted in~\cite{Po} when $(\g,\g_0)$ is a symmetric pair. In this case, $G$ can be viewed as the homogeneous space $(G\times G_0)/\Delta G_0$, where $\Delta G_0=\{(h,h)\,\vert\, h\in G_0\}<G\times G_0$.

In every coset $(g,h)\Delta G_0$ we have a unique representative $(gh^{-1},e)$ with the second factor equal to the neutral in $G_0$.  Thus, the projection $\pi\colon G\times G_0\to (G\times G_0)/\Delta G_0\cong G$, can be represent as $\pi(g,h)=gh^{-1}$. Let $\bar\ell_{(a,b)}\colon G\times G_0\to G\times G_0$, $(a,b)\in (G,G_0)$ be the standard left-action: $\bar\ell_{(a,b)}((g,h))=(ag,bh)$, and let $\ell_{(a,b)}\colon G\to G$ be the action
\begin{align*}
\ell_{(a,b)}(g)=agb^{-1}, \qquad (a,b)\in G\times G_0.
\end{align*}
Then the actions commute with the projection $\pi$:
$\pi\circ\bar \ell_{(a,b)}=\ell_{(a,b)}\circ\pi$.

The action $\ell$ is the action by isometries
with respect to $\mathfrak p_1\oplus\g_0$--sub-Riemannian structure on $G$.
Therefore, the Agrachev--Brockett--Jurjdevic solution~\eqref{rekonstrukcija} represents the action
of one-parametric subgroup $(\exp(ts_1 \bar x),\exp(ts_1\bar x_0))$ within group of isometries $G\times G_0$.
\end{remark}

\section{Relation with sub-Riemannian Manakov's metrics}\label{sec5}

In this section we compare the above construction with the sub-Riemannian structures obtained from the Manakov metrics on the orthogonal group $SO(n)$ and a
class of homogeneous spaces of $SO(n)$.

\subsection{Sub-Riemannian Manakov metrics on $SO(n)$}
Let $\mathbf a$ and $\mathbf b$ be diagonal matrices $\mathbf a=\diag(a_1,\dots,a_n)$,
$\mathbf b=\diag(b_1,\dots,b_n)$ with different eigenvalues. Consider the operator:
\begin{align}\label{manakov}
A=\ad_{\mathbf a}^{-1}\circ\ad_{\mathbf b}=
\ad_{\mathbf b}\circ\ad_{\mathbf a}^{-1}\colon \mathfrak{so}(n)\longrightarrow \mathfrak{so}(n) \Longleftrightarrow \omega_{ij}=\frac{b_i-b_j}{a_i-a_j}x_{ij}, \, 1\le i<j\le n.
\end{align}

In the case $(b_i-b_j)/(a_i-a_j)>0$ for all $i\ne j$,
the inverse operator $I=A^{-1}=
\ad_{\mathbf a}\circ\ad_{\mathbf b}^{-1}$ defines the left-invariant Riemannian metric on $SO(n)$ by the scalar product
\begin{align*}
(\xi,\eta)_I=\langle I(\xi),\eta\rangle=\sum_{i<j}\frac{a_i-a_j}{b_i-b_j}\xi_{ij}\eta_{ij}.
\end{align*}

Manakov obtained polynomial integrals
\begin{align}\label{manakov-integrals}
\mathcal L=\{\tr(x+\lambda\mathbf a)^k\,\vert\, k=1,\dots,n, \, \lambda\in\R\}
\end{align}
and solved the corresponding Euler equation
\begin{align}\label{euler-manakov}
\dot x=[x,\omega], \qquad \omega=A(x)=\ad_{\mathbf a}^{-1}\circ\ad_{\mathbf b}(x)
\end{align}
in terms of theta functions~\cite{Ma}.
Mishchenko and Fomenko proved that the Manakov integrals
\eqref{manakov-integrals} form the complete
commutative set on $\so(n)^*$ (see~\cite{MF1}),
implying that the geodesic flow on $T^*SO(n)$
with the additional kinematic equation
\begin{align}\label{kin-manakov}
\dot R=R\cdot \omega, \qquad R\in SO(n)
\end{align}
is completely integrable in the non-commutative sense~\cite{MF}.

In~\cite{DGJ2009} it is noted that we can
consider the Manakov operator~\eqref{manakov} when
some of the parameters $b_i$ are equal and that then the system~\eqref{euler-manakov},~\eqref{kin-manakov}  represents the sub-Riemannian geodesic flow on $T^*SO(n)$. More precisely, suppose that
$b_1=\dots=b_{l_1}=\beta_1,\dots,
b_{n+1-l_p}=\dots=b_n=\beta_{p}$,
$l_1+l_2+\dots+l_{p}=n$, $\beta_i\ne \beta_j$, $i\ne j$, and that $\ad_\mathbf b\circ\ad_\mathbf a^{-1}$ is a positive definite restricted to $\mathfrak d$.
Here $\mathfrak d$ is the orthogonal complement of
the isotropy subalgebra
\begin{align*}
\so(n)_\mathbf b=\{\xi\in \so(n)\,
\vert\, [\xi,\mathbf b]=0\}=
\so(l_1)\oplus
\so(l_2)\oplus\dots\oplus \so(l_p)
\end{align*}
 with respect to the invariant scalar product
$\langle\cdot,\cdot\rangle$: $\so(n)=\so(n)_\mathbf b\oplus\mathfrak d$. Note that $\ad_\mathfrak b^{-1}$
is well defined on $\mathfrak d$ and that
$(\ad_\mathbf b\circ\ad_\mathbf a^{-1}\vert_\mathfrak d)^{-1}=\ad_\mathbf a\circ\ad_\mathbf b^{-1}\vert_\mathfrak d$.

The linear subspace $\mathfrak d$ always generates
$\so(n)$. Thus, the Manakov operator
\eqref{manakov} defines the sub-Riemannian
structure $ds^2_{\mathcal D,\mathbf a,\mathbf b}$ on the left-invariant distribution $\mathcal D$ (see~\eqref{mathcal D}) on $SO(n)$ by
the scalar product:
\begin{align}\label{manakov metric}
(\xi,\eta)_\mathfrak d=\langle
\ad_{\mathbf b}^{-1}\circ\ad_{\mathbf a}(\xi),\eta\rangle, \qquad \xi,\eta\in\mathfrak d.
\end{align}

Thus, according to~\cite{Ma, MF1, MF}  we have the following statement.

\begin{theorem}\label{man1}
Assume that all $a_i$ are mutually different.
The Euler equation~\eqref{euler-manakov} of the normal geodesic flow with the left-invariant sub-Riemannian structure $ds^2_{\mathcal D,\mathbf a,\mathbf b}$  is completely integrable by means of commuting integrals~\eqref{manakov-integrals}.
The normal geodesic flow~\eqref{euler-manakov},~\eqref{kin-manakov} on the phase space $T^*SO(n)$ is completely integrable in the non-commutative sense by means of the Manakov integrals~\eqref{manakov-integrals} and the components of momentum map
$\Phi(R,x)=\Ad_R(x)$. Generic motions are quasi-periodic winding over invariant isotropic tori of dimension:
\begin{align*}
\Delta=\frac12\big(\dim\so(n)+\rank\so(n)\big).
\end{align*}
\end{theorem}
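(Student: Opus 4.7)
The plan is to reduce Theorem~\ref{man1} to the classical Manakov and Mishchenko--Fomenko results by carefully checking that everything remains well defined in the sub-Riemannian regime, when some of the $b_i$ are allowed to coincide. First I would note that as long as all $a_i$ are distinct, the operator $A=\ad_{\mathbf a}^{-1}\circ\ad_{\mathbf b}$ is well defined on all of $\so(n)$, regardless of whether the $b_i$ are distinct, and the sub-Riemannian Hamiltonian equals
\begin{align*}
H_{sR}(x)=\frac12\langle A(x),x\rangle=\frac12\sum_{i<j}\frac{b_i-b_j}{a_i-a_j}x_{ij}^2,
\end{align*}
which is precisely a member of the quadratic layer of the Manakov family. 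Consequently, the Euler equation~\eqref{euler-manakov} literally is the Manakov equation on $\so(n)^*$; the only feature that degenerates when some $b_i$ agree is the positivity of $A$, and positivity is recovered after restriction to the distribution $\mathfrak d$ via~\eqref{manakov metric}.

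Next, for the commutative integrability on $\so(n)^*$ I would invoke the Mishchenko--Fomenko theorem for the shift-of-argument family~\eqref{manakov-integrals}. These integrals are polynomial in $x$, hence insensitive to coincidences among the $b_i$; by~\cite{MF1} they pairwise Poisson commute with respect to the Lie--Poisson bracket, and the number of functionally independent ones on a generic coadjoint orbit equals $\frac12(\dim\so(n)+\rank\so(n))$, which is the Liouville half-dimension on such orbits plus the rank. Since $H_{sR}\in\mathcal L$, this establishes the first claim, and moreover Manakov's theta-function solution of~\eqref{euler-manakov} remains valid.

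For the lifted flow~\eqref{euler-manakov},~\eqref{kin-manakov} on $T^*SO(n)$ I would then add the components of the momentum map $\Phi(R,x)=\Ad_R(x)$ of the left $SO(n)$--action; by Noether's theorem they are first integrals, and together with the Manakov integrals pulled back from $\so(n)^*$ they form a Mishchenko--Fomenko non-commutative set~\cite{MF} whose functional dimension is $\dim\so(n)+\rank\so(n)$ and whose commutant has dimension $\rank\so(n)$. The generic common level sets are therefore invariant isotropic tori of dimension
\begin{align*}
\Delta=\tfrac12\bigl(\dim\so(n)+\rank\so(n)\bigr),
\end{align*}
carrying a quasi-periodic flow by the standard non-commutative Liouville--Arnold theorem. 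The subtle point I would want to argue carefully is exactly the one that distinguishes the sub-Riemannian case from the Riemannian one: when the $b_i$ coincide, the inertia operator $I=A^{-1}$ fails to exist, so no Lagrangian is available, but the entire Manakov--Mishchenko--Fomenko machinery uses only $A$ and the polynomial integrals, never $I$; this is what allows the classical results to be transferred to the sub-Riemannian setting without modification.
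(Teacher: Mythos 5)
Your overall strategy is exactly the one the paper relies on: Theorem~\ref{man1} is stated there as a direct consequence of Manakov's integrals~\cite{Ma}, the Mishchenko--Fomenko completeness of the family~\eqref{manakov-integrals} on $\so(n)^*$~\cite{MF1}, and non-commutative integrability on $T^*SO(n)$ via the left momentum map~\cite{MF}. Your key observation --- that when all $a_i$ are distinct the operator $A=\ad_{\mathbf a}^{-1}\circ\ad_{\mathbf b}$ and the polynomial integrals survive coincidences among the $b_i$, only the inertia operator $I=A^{-1}$ being lost --- is precisely what makes the sub-Riemannian case a corollary of the classical results. The identification of $H_{sR}(x)=\tfrac12\langle A(x),x\rangle$ with an element of the quadratic layer of the shift-of-argument family is also correct, though it deserves one explicit line: one needs a polynomial $p$ with $p(a_i)=b_i$ so that $\frac{b_i-b_j}{a_i-a_j}=\frac{p(a_i)-p(a_j)}{a_i-a_j}$, which exists by Lagrange interpolation precisely because the $a_i$ are distinct.

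The genuine problem is the dimension count in your last paragraph; as stated it is internally inconsistent and does not yield the claimed torus dimension. Write $d=\dim\so(n)$, $r=\rank\so(n)$. The algebra generated by the $d$ components of $\Phi=\Ad_R(x)$ together with the $\tfrac12(d+r)$ independent Manakov integrals has functional dimension $d+\tfrac12(d+r)-r=\tfrac32 d-\tfrac12 r$, since the $r$ Casimirs are the only functions common to the two families; it is not $d+r$. Its commutant is generated by the Manakov integrals themselves (left-invariant functions Poisson-commute with the right-invariant components of $\Phi$), so it has dimension $\tfrac12(d+r)$, not $r$. With your numbers one would get $\ddim+\dind=d+2r<2d=\dim T^*SO(n)$ for every $n\ge 3$, so the Mishchenko--Fomenko criterion would fail and the common level sets would not be the asserted tori; moreover, tori of dimension $\dind=r$ would contradict your own final formula. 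With the corrected numbers one has $\ddim+\dind=\bigl(\tfrac32 d-\tfrac12 r\bigr)+\tfrac12(d+r)=2d$, and the generic invariant isotropic tori have dimension $\dind=\tfrac12(d+r)=\Delta$, as in the theorem. Once this count is repaired, the rest of your argument stands and agrees with the paper's citation-based proof.
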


\begin{example}\label{bloch}
Let  $\mathbf b=(b_1,b_2\dots,b_2)$, $b_1> b_2$, and $a_1>a_i$, $a_i\ne a_j$, $i,j>1$, $i\ne j$. Then
\begin{align*}
\mathfrak d=\Span\{ \mathbf e_{1i}=\mathbf e_1\wedge \mathbf e_i\, \vert\, i=2,\dots,n\},
\end{align*}
and the sub-Riemannian structure
is given by
\begin{align*}
(\xi,\eta)_\mathfrak d=\sum_{i=2}^m A_i\xi_{1i}\eta_{1j}, \qquad A_i=(a_1-a_i)/(b_1-b_i), \qquad i=2,\dots,n.
\end{align*}
The corresponding normal
sub-Riemanian geodesic flow, in the right-invariant formulation, was thoroughly examined in~\cite{BAB}. It is related to an optimal problem of a rubber ball rolling over a hyperplane.
\end{example}

The Manakov sub-Riemannain metric
given in Example~\ref{bloch} is well defined
if some parameters $a_i$ are mutually equal.
In particular, it fits into the chain of subalgebras construction
for $a_1\ne a_2=\dots=a_n$.
Thus, it is interesting to consider matrixes $\mathbf a$ with multiple eigenvalues. In~\cite{DGJ2009}, the authors refer to the corresponding systems as singular Manakov flows.

The construction used in~\cite{DGJ2009} can be easily adapted to sub-Riemannian structures. Suppose that
$a_1=\dots=a_{k_1}=\alpha_1,\dots,a_{n+1-k_r}=\dots=a_n=\alpha_r$, $k_1+k_2+\dots+k_r=n$,
$\alpha_i\ne\alpha_j$, $i\ne j$, such that
\begin{align*}
\so(n)_\mathbf a=\{\xi\in \so(n)\,
\vert\, [\xi,\mathbf a]=0\}=\so(k_1)\oplus\dots\oplus\so(k_r)\le\so(n)_\mathbf b,
\end{align*}

Let $\mathfrak v$ be the orthogonal complement of
$\so(n)_\mathbf a$: $\so(n)=\so(n)_\mathbf a\oplus \mathfrak v$ (note that $\mathfrak d \le \mathfrak v$).
Now $\ad^{-1}_\mathbf a$ is well defined on $\mathfrak v$ and we assume that $\ad_\mathbf b\circ\ad_\mathbf a^{-1}$ is a positive definite operator restricted to $\mathfrak d$. Again, we have the sub-Riemannian structure
$ds^2_{\mathcal D,\mathbf a,\mathbf b}$
on the left-invariant distribution $\mathcal D$ defined by the scalar product~\eqref{manakov metric}.
The Hamiltonian function of the normal geodesic flow
reads
\begin{align}\label{sub-manakov}
H_{sR,\mathbf a,\mathbf b}(x)=\frac12\langle \ad_\mathbf b\circ\ad_\mathbf a^{-1}(x_\mathfrak v),x\rangle
\end{align}
where by $x_\mathfrak v$ we denote the orthogonal projection of $x\in\so(n)^*\cong \so(n)$ to $\mathfrak v$ with respect to $\langle\cdot,\cdot\rangle$.

From the relations
\begin{align*}
\pr_{\so(n)_\mathbf a}[x_\mathfrak v,\ad_\mathbf a^{-1}\ad_\mathbf b (x_\mathfrak
v)]=0, \qquad [\so(n)_\mathbf a,\mathfrak v]\subset\mathfrak v
\end{align*}
(see~\cite{DGJ2009} with $A=\mathbf a$, $B=\mathbf b$, $M=x$, $\Omega=\omega$, $\mathfrak B=0$)\footnote{There is typo in~\cite[eq. (14)]{DGJ2009}, where one $M_{\mathfrak v}$ is missing.},
the Euler equation now takes the form of the singular Manakov flow:
\begin{align}
&\dot x_{\so(n)_\mathbf a}=0, \label{s0} \\
&\dot x_\mathfrak v=[x_{\so(n)_\mathbf a}+x_{\mathfrak v},\ad_\mathbf a^{-1}\ad_\mathbf b (x_\mathfrak v)].
\label{s1}
\end{align}

From~\cite[Theorem 1]{DGJ2009} we get:

\begin{theorem}\label{man2}
The normal geodesic flow~\eqref{s0},~\eqref{s1},~\eqref{kin-manakov} of the left-invariant sub-Riema\-nnian structure $ds^2_{\mathcal D,\mathbf a,\mathbf b}$ is completely integrable in the non-commutative sense by means of the Manakov integrals~\eqref{manakov-integrals}, the components of preserved angular momentum $x_{\so(n)_\mathbf a}$, and the components of momentum map $\Phi(R,x)=\Ad_R(x)$.
\end{theorem}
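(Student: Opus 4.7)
The plan is to apply \cite[Theorem 1]{DGJ2009} essentially verbatim. That theorem establishes non-commutative integrability for the Riemannian singular Manakov flow on $T^*SO(n)$, and the crucial observation is that the Euler equation \eqref{s0}--\eqref{s1} of the present sub-Riemannian problem coincides with the equation treated there: the angular velocity is again $\omega=\ad_{\mathbf a}^{-1}\ad_{\mathbf b}(x_{\mathfrak v})$, and the dynamics on $\so(n)^*$ is determined entirely by this $\omega$ together with the Lie--Poisson structure, independently of whether the Legendre transform of $H$ extends from $\mathfrak v$ to all of $\so(n)$. Hence the sub-Riemannian Hamiltonian \eqref{sub-manakov} produces exactly the same momentum-space flow as the corresponding Riemannian Manakov Hamiltonian, and the integrals from \cite{DGJ2009} transfer directly.

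First I would verify that the three families listed in the statement are first integrals of the combined system \eqref{s0}, \eqref{s1}, \eqref{kin-manakov}. The Manakov family \eqref{manakov-integrals} is preserved by a Lax-type argument that does not require $\ad_{\mathbf a}^{-1}\ad_{\mathbf b}$ to be positive-definite or even defined on all of $\so(n)$; it suffices that $\omega$ lies in the image of this operator on $\mathfrak v$, which is automatic from \eqref{sub-manakov}. The components of $x_{\so(n)_{\mathbf a}}$ are trivially conserved by \eqref{s0}; they span a Lie subalgebra of $C^\infty(\so(n)^*)$ isomorphic to $\so(n)_{\mathbf a}=\so(k_1)\oplus\cdots\oplus\so(k_r)$, and Poisson commute with every shifted trace polynomial $\tr(x+\lambda\mathbf a)^k$ since the latter are $\Ad$-invariant under the centralizer $\so(n)_{\mathbf a}$. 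Finally, the left $SO(n)$-invariance of $H_{sR,\mathbf a,\mathbf b}$ and Noether's theorem yield the right-invariant momentum-map integrals $\Phi(R,x)=\Ad_R(x)$.

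To close the argument I would perform the Mishchenko--Fomenko dimension count: letting $F$ denote the Poisson algebra generated by the three families on $T^*SO(n)$, one must check that $\dim F+\ddim F=\dim T^*SO(n)=n(n-1)$. On the momentum factor this is precisely what is established in \cite[Theorem 1]{DGJ2009}, whose singular Manakov count combines the Manakov integrals with the components of $x_{\so(n)_{\mathbf a}}$; on the group factor, the right-invariant components of $\Phi$ commute with every left-invariant function and supply the remaining independent directions needed to reach $n(n-1)$. The main obstacle I expect is the careful bookkeeping of the functional dependencies among the three families: the Casimirs of $\so(n)_{\mathbf a}$ sit inside the Manakov family after specializing $\lambda$, and the $\Ad$-invariant polynomials are simultaneously left- and right-invariant integrals. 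This overlap is exactly what was analyzed in \cite{DGJ2009}, so no new input is required beyond that reference and \cite{MF}, and the theorem follows.
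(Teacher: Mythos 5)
Your proposal is correct and follows the same route as the paper: the paper's entire argument is to observe that the Euler equation of the sub-Riemannian Hamiltonian~\eqref{sub-manakov} takes the singular Manakov form~\eqref{s0},~\eqref{s1} and then to invoke \cite[Theorem 1]{DGJ2009}, which already contains the completeness count for the Manakov integrals together with $x_{\so(n)_{\mathbf a}}$ and the momentum map. The additional verifications you sketch (conservation of each family, the Mishchenko--Fomenko dimension count, the bookkeeping of overlaps) are exactly the content delegated to that reference, so no new input is needed.
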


On the other hand, on $\mathcal D$ we can define sub-Riemannian structures by using chains of Lie subalgebras. For example, we can take the chain~\eqref{lanci} with a suitable choice of the set $\mathcal I$. Therefore, for the left-invariant bracket generating distributions $\mathcal D$ induced from $\mathfrak d=(so(n)_\mathbf b)^\perp$, we have two natural constructions of sub-Rieammanian structures with completely integrable geodesic flows: by using the chains of Lie subalgebras and the Manakov sub-Riemannian metrics. However, the structures are different in general.

\begin{example}\label{M1primer}
Assume $b_1=\dots=b_{l_1}=\beta_1$, $b_{l_1+1}=\dots=b_n=\beta_2$,
\begin{align*}
\mathfrak d=\big(\so(l_1)\oplus\so(l_2)\big)^\perp
=\Span\{\mathbf e_{ij}\,\vert\, 1\le i \le l_1, \, l_1+1\le j \le n\}.
\end{align*}
Note that $(\so(n),\so(l_1)\otimes \so(l_2))$ is a symmetric pair.

  The condition $\so(n)_\mathbf a\le \so(n)_\mathbf b$ implies that we can have equalities only between parameters $a_i$ with indexes that belong to disjoint sets $\{1,2,\dots,l_1\}$ and $\{l_1+1, l_1+2,\dots,n\}$.
The Manakov metrics $ds^2_{\mathcal D,\mathbf a,\mathbf b}$ are
defined by the scalar product
\begin{align*}
(\xi,\eta)_\mathfrak d=\sum_{i=1}^{l_1}\sum_{j=l_1+1}^n \frac{a_i-a_j}{\beta_1-\beta_2}\xi_{ij}\eta_{ij}, \qquad \xi,\eta\in\mathfrak d.
\end{align*}

When $l_1=1$, we have the case considered in Example~\ref{bloch}. For $a_1=\dots=a_{l_1}=\alpha_1$, $a_{l_1+1}=\dots=a_n=\alpha_2$, the sub-Riemmanian Manakov structure coincides with the structure related to the chain
\begin{align*}
\g_0=\so(l_1)\oplus\so(l_2)<\g_1=\so(n)
\end{align*}
with $s_1=(\beta_1-\beta_2)/(\alpha_1-\alpha_2)$. This is illustated in Figure~\ref{fig:example3} (\emph{left}). Note that this is the same solution curve as the curve from Subsection~\ref{ssec:42} with $s_0=s_2=0$ and $s_1=s_3=(\beta_1-\beta_2)/(\alpha_1-\alpha_2)$.

\begin{figure}[h]
    \centering
    \includegraphics[width=0.43\linewidth]{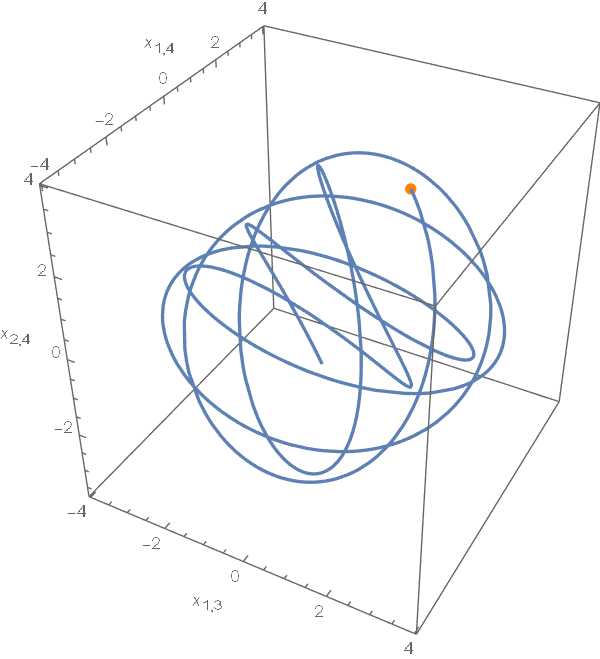}
    \includegraphics[width=0.43\linewidth]{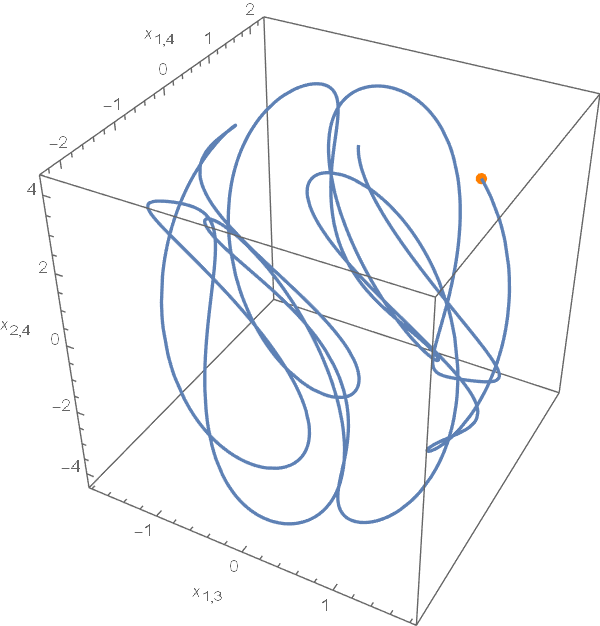}
    \caption{Integral curves for the pair $(\so(4),\so(2)\otimes \so(2))$ and Manakov metrics with $a_1\!=\!a_2$, $a_3\!=\!a_4$ (\emph{left}), and $a_1\!\neq\! a_2$, $a_3\!\neq\! a_4$ (\emph{right}).}
    \label{fig:example3}
\end{figure}
\end{example}

\begin{example}\label{M2primer}
Let $b_1=\dots=b_{l_1}=\beta_1$,
$b_{l_1+1}=\dots=b_{l_1+l_2}=\beta_2$, $b_{l_1+l_2+1}=\dots=b_n=\beta_3$. Then
\begin{align*}
\mathfrak d=\big(\so(l_1)\oplus\so(l_2)\oplus\so(l_3)\big)^\perp, \qquad l_1+l_2+l_3=n.
\end{align*}
For
$a_1=\dots=a_{l_1}=\alpha_1$, $a_{l_1+1}=\dots=a_{l_2}=\alpha_2$, $a_{l_1+l_2+1}=\dots=a_n=\alpha_3$,
such that
\begin{align*}
\alpha_1-\alpha_2=\alpha_2-\alpha_3, \quad \beta_1-\beta_2=\beta_2-\beta_3,
\end{align*}
the sub-Riemmanian Manakov structure coincides with the structure related to the chain
\begin{align*}
\g_0=\so(l_1)\oplus\so(l_2)\oplus\so(l_3)<\g_1=\so(n)
\end{align*}
with $s_1=(\beta_1-\beta_2)/(\alpha_1-\alpha_2)=
(\beta_2-\beta_3)/(\alpha_2-\alpha_3)=(\beta_1-\beta_3)/(\alpha_1-\alpha_3)$.
\end{example}

\subsection{Sub-Riemannian Manakov metrics on homogeneous spaces of $SO(n)$}

Let $ds^2_{\mathcal D,\mathbf a,\mathbf b}$ be the sub-Riemannian structure on $SO(n)$ with the matrix $\mathbf a$ with multiple eigenvalues.
Let $K$ be a subgroup of $SO(n)$ with the Lie algebra
$\mathfrak k=Lie(K)$, which is a Lie subalgebra of
$\so(n)_\mathbf a$ ($\mathfrak k\le \so(n)_\mathbf a$). We additionally suppose that
\begin{align*}
\mathfrak k\oplus\mathfrak d\ne \so(n) \quad \Longleftrightarrow \quad \mathfrak d<\mathfrak k^\perp.
\end{align*}

Repeating the construction from Section~\ref{redukcija}, we consider the right $K$--action on $SO(n)$ and $T^*SO(n)$, homogeneous space $SO(n)/K$ and its cotangent bundle
$T^*(SO(n)/K)\cong (T^*SO(n))_0/K$,
\begin{align*}
(T^*SO(n))_0=\{(R,x)\in SO(n)\times \so(n)^*\cong T^*SO(n), \vert\, \pr_{\mathfrak k}(x)=0\}.
\end{align*}
Since $\mathfrak d<\mathfrak k^\perp$, the linear space $\mathfrak d$ defines the $SO(n)$--invariant bracket generating distribution $\mathcal D_0$ on $SO(n)/K$. We obtain the sub-Riemannian structure $ds^2_{\mathcal D_0,\mathbf a,\mathbf b}$ on $\mathcal D_0$ with the Hamiltonian function $H_{sR,0}$ induced from the restriction of the Hamiltonian~\eqref{sub-manakov} to $(T^*SO(n))_0$.

All integrals mentioned in Theorem~\ref{man2} are right
$K$--invariant, and their restrictions to $(T^*SO(n))_0$ project to the cotangent bundle
$T^*(SO(n)/K)$. The completeness of these integrals is proven in~\cite{DGJ2009, DGJ2015, Mik2}.
We note that proof of this statement does not follow from Theorem~\ref{man2} and requires additional techniques. Thus, we get

\begin{theorem}\label{man3}
The normal sub-Riemannian geodesic flow of the Manakov sub-Riemannian structure $ds^2_{\mathcal D_0,\mathbf a,\mathbf b}$ on the homogeneous space $SO(n)/K$ is completely integrable in the non-commutative sense.
The complete set of integrals on $T^*(SO(n)/K)$
is induced from restrictions of the Manakov integrals~\eqref{manakov-integrals}, the components of angular momentum $x_{\so(n)_\mathbf a}$, and the components of momentum map $\Phi(R,x)=\Ad_R(x)$ to
$(T^*SO(n))_0$.
\end{theorem}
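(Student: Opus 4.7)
The plan is to apply Marsden--Weinstein reduction with respect to the right $K$-action on $T^*SO(n)$, paralleling the reduction argument of Section~\ref{redukcija}, and then invoke the completeness results already available in the literature for the Manakov system on homogeneous spaces. The key hypothesis $\mathfrak k \le \so(n)_\mathbf a$ ensures that $\mathbf a$ is $\Ad_K$-fixed, so every ingredient of the unreduced integrable system from Theorem~\ref{man2} is compatible with the right $K$-action.

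First I would check right $K$-invariance of the Hamiltonian $H_{sR,\mathbf a,\mathbf b}$ in~\eqref{sub-manakov} and of each family of integrals appearing in Theorem~\ref{man2}. In the left-trivialized picture, the right $K$-action on $T^*SO(n)\cong SO(n)\times \so(n)^*$ is $(R,x)\mapsto (Rk,\Ad_{k^{-1}}x)$, identifying $\so(n)^*\cong\so(n)$ by the bi-invariant scalar product. The Manakov integrals $\tr(x+\lambda\mathbf a)^k$ are immediately $\Ad_K$-invariant in $x$, because $\mathbf a$ is $\Ad_K$-fixed and the trace is $\Ad$-invariant. The subspace $\so(n)_\mathbf a$ is $\Ad_K$-invariant, so $x_{\so(n)_\mathbf a}$ is $\Ad_K$-equivariant, and its $\Ad_K$-invariant polynomial combinations descend to genuine $K$-invariant functions. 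The components of the total-space momentum map $\Phi(R,x)=\Ad_R(x)$ are right $K$-invariant since $\Ad_{Rk}\circ\Ad_{k^{-1}}=\Ad_R$.

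Next I would carry out the reduction itself. The zero-level set $(T^*SO(n))_0=\{\pr_{\mathfrak k}(x)=0\}$ is $K$-invariant, and $(T^*SO(n))_0/K$ is symplectomorphic to $T^*(SO(n)/K)$ with its canonical symplectic structure. Because $\mathfrak d<\mathfrak k^\perp$, the subspace $\mathfrak d$ projects to an $SO(n)$-invariant bracket-generating distribution $\mathcal D_0$ on $SO(n)/K$, and the restriction of $H_{sR,\mathbf a,\mathbf b}$ to $(T^*SO(n))_0$ drops to $H_{sR,0}$, the Hamiltonian of the normal sub-Riemannian geodesic flow of $ds^2_{\mathcal D_0,\mathbf a,\mathbf b}$. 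All the right-$K$-invariant integrals therefore restrict to $(T^*SO(n))_0$ and push down to functions on $T^*(SO(n)/K)$ that are closed under the induced Poisson bracket and Poisson-commute with $H_{sR,0}$.

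The hard part will be \emph{completeness} of the reduced family. Non-commutative integrability on $T^*SO(n)$ does not automatically descend: passing to $(T^*SO(n))_0/K$ can produce dependencies among the reduced integrals, and one must confirm that the generic rank of the algebra they generate matches the dimension of $T^*(SO(n)/K)$ in the Mishchenko--Fomenko sense. A direct count requires controlling the generic coadjoint orbit of $SO(n)_\mathbf a$ through $x_{\so(n)_\mathbf a}$ inside the constraint $\pr_{\mathfrak k}(x)=0$, and the way the Manakov commutative family intersects functions pulled back from $SO(n)/K$. Rather than redo this by hand, I would invoke the precise completeness statement proved in~\cite{DGJ2009, DGJ2015, Mik2}, which handles exactly the families of integrals in play here, and thereby conclude non-commutative integrability on $T^*(SO(n)/K)$.
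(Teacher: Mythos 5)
Your proposal is correct and follows essentially the same route as the paper: verify right $K$--invariance of the Hamiltonian and of each family of integrals from Theorem~\ref{man2}, pass to the Marsden--Weinstein reduction $(T^*SO(n))_0/K\cong T^*(SO(n)/K)$, and delegate the completeness of the reduced family to~\cite{DGJ2009, DGJ2015, Mik2}. You also correctly flag, as the paper does, that completeness does not descend automatically from the unreduced statement and genuinely requires the cited results.
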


\begin{example}
Let us consider Example~\ref{bloch} and
\begin{align*}
K=\{R\in SO(n)\, \vert \, R=\diag(1,1,S), \, S\in SO(n-2)\}.
\end{align*}
Then $SO(n)/K=SO(n)/SO(n-2)$ is the rank two Stiefel
variety $V_{n,2}$, and $\mathfrak d$ is a subspace of
$\mathfrak so(n-2)^\perp$ of codimension $n-2$.
The distribution $\mathcal D_0\subset TV_{n,2}$ coincides with the distribution $\mathcal D_0$
considered in~\cite{Jo2025}. For $a_1\ne a_2=\dots=a_n$, and the chain of subalgebras
\begin{align*}
\mathfrak k=\g_0=\so(n-2)<\g_1=\so(n-1)<\g_2=\so(n),
\end{align*}
we have the example for Theorem 2, with
$s_0=s_1=0$, $s_2=(b_1-b_2)/(a_1-a_2)$.

Now, consider Example~\ref{M1primer} with
$b_1=b_2\ne b_3=\dots=b_n$ and the same subgroup $K=SO(n-2)$. Then $\mathfrak d$ is a subspace of codimension 1 in $\mathfrak so(n-2)^\perp$. The distribution $\mathcal D_0$ coincides with the contact distribution $\mathcal H\subset TV_{n,2}$ considered in~\cite{Jo2025}.
For $a_1=a_2\ne a_3=\dots=a_n$
and the chain of subalgebras
\begin{align*}
\mathfrak k=\g_0=\so(n-2)<\g_1=\so(n-2)\oplus so(2)<\g_2=\so(n),
\end{align*}
we also have the example for Theorem 2, with
$s_0=s_1=0$, $s_2=(b_1-b_2)/(a_1-a_2)$.

The above distributions are, up to conjugation,
all $SO(n)$--invariant, bracket generating distribution of $TV_{n,2}$.
On the other hand,  $ds^2_{\mathcal D_0,s}$ and $ds^2_{\mathcal D_0,\mathbf a,\mathbf b}$ do not cover all possible $SO(n)$--invariant sub-Riemannian structures on $V_{n,r}$. A detailed description can be found in~\cite{Jo2025}.
\end{example}

\subsection*{Acknowledgements}
We thank the referee for careful reading and useful remarks. This research is part of the project IntegraRS of the Science Fund of Serbia. The research of B.J. was supported by the Serbian Ministry of Education, Science and
Technological Development through Mathematical Institute of Serbian
Academy of Sciences and Arts.
The research of T.\v S. and S.V. is partially supported by the Ministry of  Science, Technological Development and Innovation, Republic of Serbia, through the project 451-03-33/2026-03/200104.

\end{document}